\numberwithin{equation}{section}
\newtheorem{theorem}{Theorem}[section]
\newtheorem{lemma}[theorem]{Lemma}
\newtheorem{proposition}[theorem]{Proposition}
\newtheorem{conjecture}[theorem]{Conjecture}
\theoremstyle{definition}
\newtheorem{remark}[theorem]{Remark}
\newtheorem*{acknowledgment}{Acknowledgment}
\renewcommand{\Re}{\mathrm{Re}\hspace{1pt}}
\renewcommand{\Im}{\mathrm{Im}\hspace{1pt}}
\newcommand{\meas}{\mathrm{meas}} 
\newcommand{\m}{\mathbf{m}}
\newcommand{\supp}{\mathrm{supp}}
\newcommand{\dist}{\mathrm{dist}}
\title[Universality for certain Dirichlet series]{\large Universality for the iterated integrals of logarithms of \textit{L}-functions in the Selberg class}
\author[K. Nakai]{\large Keita Nakai}
\date{}
\begin{document}

\begin{abstract}
We prove the universality theorem for the iterated integrals of logarithms of $L$-functions in the Selberg class on some line parallel to the real axis.
\end{abstract}

\maketitle

\section{Introduction and statement of main results}

Let $s = \sigma + it$ be a complex variable. 
The Riemann zeta-function $\zeta(s)$ is defined by the infinite series 
$\sum_{n=1}^{\infty} n^{-s}$ 
in the $\sigma > 1$, 
and can be continued meromorphically to the whole plane $\mathbb{C}$. 
Bohr and Courant \cite{BC} proved that the set $\{\zeta(\sigma + it): t \in \mathbb{R}\}$ is dense in $\mathbb{C}$ for  $1/2 < \sigma < 1$, and Bohr \cite{Bo} proved that the set $\{\log{\zeta}(\sigma + it): t \in \mathbb{R}\}$ is dense in $\mathbb{C}$ for  $1/2 < \sigma < 1$. 
The problem that the set $\{\zeta(1/2 + it): t \in \mathbb{R}\}$ is dense or not in $\mathbb{C}$ is still open. 
For this reason, Endo and Inoue \cite{EI} considered the following functions $\tilde{\eta}_{m}(s)$ defined by 
\[
\tilde{\eta}_m(\sigma +it) = \int_{\sigma}^{\infty} \tilde{\eta}_{m-1}(\alpha + it)\, d\alpha,\ 
\tilde{\eta}_0(\sigma +it) = \log{\zeta(\sigma + it)} = \int_{\infty}^{\sigma} \frac{\zeta'}{\zeta}(\alpha + it)\, d\alpha
\]
for $\sigma$ which is not equal to the real part of the zeros of $\zeta(s)$ and  proved the following density result.
\begin{theorem}[Endo--Inoue \cite{EI}]
Let $1/2 \le \sigma < 1$, and m be a positive integer. Then the set 
\[
\{\tilde{\eta}_{m}(\sigma + it) : t \in \mathbb{R}\}
\]
is dense in $\mathbb{C}$.
\end{theorem}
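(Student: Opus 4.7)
The plan is to mimic the Bohr--Courant density argument for $\log\zeta$, now applied to the iterated integral $\tilde\eta_m$. First, a short induction on $m$ using the defining integral shows that, for $\sigma$ greater than the real parts of the zeros of $\zeta$,
\[
\tilde\eta_m(\sigma+it) = \frac{1}{(m-1)!}\int_0^\infty u^{m-1}\log\zeta(\sigma+u+it)\,du.
\]
In the half-plane $\sigma>1$, substituting the absolutely convergent series $\log\zeta(s)=\sum_p\sum_{k\ge1}k^{-1}p^{-ks}$ and integrating term by term yields
\[
\tilde\eta_m(s)=\sum_p\sum_{k\ge1}\frac{p^{-ks}}{k\,(k\log p)^m},
\]
a generalized Dirichlet series that converges absolutely in this range.

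Next I would prove a mean-square approximation on vertical lines in the strip $1/2\le\sigma<1$, namely
\[
\lim_{N\to\infty}\limsup_{T\to\infty}\frac{1}{T}\int_0^T\bigl|\tilde\eta_m(\sigma+it)-P_N(\sigma+it)\bigr|^2\,dt=0,
\]
where $P_N$ is the prime series above truncated to $p\le N$. To obtain this one splits the integral representation at some $u=u_0>1-\sigma$: the tail $u>u_0$ is handled by the absolutely convergent Dirichlet series, while the bounded piece $0<u<u_0$ is controlled via standard mean-square bounds for $\log\zeta$ in the strip (obtained by contour shifts off a zero-free region together with Carlson-type estimates). The smoothing factor $u^{m-1}$ from the iterated integration actually works in our favour by damping the contributions near real parts of zeros.

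With this approximation in hand, density follows by the classical Bohr--Courant argument via Kronecker's theorem. Since $\{\log p : p \le N\}$ is $\mathbb{Q}$-linearly independent, the vector $(t\log p)_{p\le N}/(2\pi)$ is equidistributed modulo $1$, so the closure of $\{P_N(\sigma+it):t\in\mathbb{R}\}$ coincides with the image of the polytorus under the map $(\theta_p)_p \mapsto \sum_{p\le N}\sum_{k\ge1}\frac{e^{-ik\theta_p}p^{-k\sigma}}{k(k\log p)^m}$. To finish, one must show that as $N\to\infty$ these images exhaust $\mathbb{C}$; this reduces to showing divergence of the ``leading'' series $\sum_p p^{-\sigma}/(\log p)^m$, which holds throughout $1/2\le\sigma\le 1$ by the prime number theorem, the additional $(\log p)^m$ factor being too mild to prevent divergence.

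The main technical obstacle is the mean-square estimate at the critical edge $\sigma=1/2$. There $\log\zeta(1/2+u+it)$ has branch singularities at the real parts of zeros of $\zeta$, and these accumulate toward $u=0$; the weight $u^{m-1}$ renders the iterated integral well-defined, but quantitative $L^2$ control requires a careful treatment of the contribution of low-lying zeros to the inner integral. This is precisely why the iteration $m\ge 1$ is essential, and handling it is the delicate heart of the argument.
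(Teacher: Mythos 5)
This statement is quoted in the paper as a known result of Endo--Inoue \cite{EI}; the paper gives no proof of it, so there is nothing internal to compare against, and your proposal must stand on its own. Its architecture (integral representation $\tilde\eta_m(\sigma+it)=\frac{1}{(m-1)!}\int_0^\infty u^{m-1}\log\zeta(\sigma+u+it)\,du$, the prime-power Dirichlet series, an $L^2$ approximation by a truncation $P_N$, then Kronecker--Weyl) is indeed the standard Bohr--Courant route and is essentially how \cite{EI} proceeds. But there is a genuine gap at the step you yourself identify as the heart of the matter: the mean-square approximation at $\sigma=1/2$. The tools you invoke for the piece $0<u<u_0$ --- ``contour shifts off a zero-free region together with Carlson-type estimates'' --- cannot reach the critical line, since no zero-free region touching $\sigma=1/2$ is known. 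What is actually required is an explicit-formula/zero-density treatment of the contribution of zeros near height $t$ (this is the content of Inoue's approximation formulas \cite{In}, on which \cite{EI} relies, and it is the analogue of Lemmas~\ref{lemma poly1}--\ref{lemma poly2} of the present paper, which discard a small exceptional set of $t$ near ordinates of zeros rather than integrating through them). Asserting that ``the weight $u^{m-1}$ damps the contributions near real parts of zeros'' is not a proof; quantifying that damping in $L^2$ over $t\in[0,T]$ is precisely the missing argument.

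Two smaller points. First, your denseness endgame cites only the divergence of $\sum_p p^{-\sigma}(\log p)^{-m}$, but the classical lemma also requires $\sum_p p^{-2\sigma}(\log p)^{-2m}<\infty$ so that the tail series converges and the polytorus images exhaust $\mathbb{C}$; at $\sigma=1/2$ this square-summability holds only because $m\ge1$, so the hypothesis $m\ge1$ enters the denseness step itself, not merely the $L^2$ estimate as you suggest. Second, the divergence claim ``throughout $1/2\le\sigma\le1$'' fails at $\sigma=1$ for $m\ge1$ (the series $\sum_p p^{-1}(\log p)^{-m}$ converges); this is harmless since the theorem assumes $\sigma<1$, but it should be stated correctly.
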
  
 
On the other hand, in 1975, Voronin proved the following theorem which is called the universality theorem. 

\begin{theorem} [Voronin \cite{Vo}]

Let $\mathcal{K}$ be a compact set in the strip $1/2 < \sigma < 1$ with connected complement, and let $f(s)$ be a non-vanishing continuous function on $\mathcal{K}$ that is analytic in the interior of $\mathcal{K}$. Then, for any $\varepsilon > 0$ 
\[
\liminf_{T \to \infty} \frac{1}{T} \meas \left\{\tau \in [0, T] :  \sup_{s \in \mathcal{K}} |\zeta(s + i\tau) -f(s)| < \varepsilon \right\} > 0,
\]
where $\meas$ denotes the Lebesgue measure.
\end{theorem}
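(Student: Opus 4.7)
The plan is a four-step approach combining Mergelyan's theorem, a rearrangement lemma in Hilbert space, Weyl equidistribution, and a mean-square tail estimate.

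First I would reduce to approximation by exponentials of polynomials. Since $f$ is non-vanishing on $\mathcal{K}$ and $\mathcal{K}^c$ is connected, a continuous branch of $\log f$ exists on $\mathcal{K}$ which is holomorphic in the interior. By Mergelyan's theorem, $\log f$ is uniformly approximable by polynomials $g(s)$. It therefore suffices to show that for every polynomial $g$ and every $\varepsilon > 0$, the set of $\tau \in [0,T]$ for which $\sup_{s\in\mathcal{K}} |\log \zeta(s+i\tau) - g(s)| < \varepsilon$ has positive lower density.

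Next, introduce the truncated Euler product
\[
\zeta_N(s;\bar\theta) = \prod_{p\le N} \bigl(1 - e^{-2\pi i \theta_p} p^{-s}\bigr)^{-1}, \qquad \bar\theta = (\theta_p)_{p\le N} \in [0,1)^{\pi(N)},
\]
and work in the Hilbert space $H$ of square-integrable holomorphic functions on a bounded neighborhood $D \Supset \mathcal{K}$ with $\overline{D} \subset \{1/2 < \sigma < 1\}$. The heart of the argument is a Pechersky-type rearrangement theorem: if $\{v_p\}$ is a sequence of vectors in $H$ with $\sum_p \|v_p\|^2 < \infty$ but $\sum_p \|v_p\|$ divergent, and if the closed linear span of any cofinite subsequence is all of $H$, then the set of convergent limits $\sum_p v_{\sigma(p)}$ over permutations $\sigma$ is dense in $H$. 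Applied to $v_p(s) = \log(1 - e^{-2\pi i \theta_p^{0}} p^{-s})^{-1}$ with a judicious starting phase $\theta_p^{0}$ (typically $\theta_p^0 = 1/4$, giving imaginary unit factors), this produces $\bar\theta^{*}$ and large $N$ such that $\log \zeta_N(s;\bar\theta^{*})$ approximates $g(s)$ uniformly on $\mathcal{K}$ within $\varepsilon/3$. Verifying the density hypothesis on the $v_p$'s uses the arithmetic independence of prime powers and is where the assumption $\sigma > 1/2$ plays its essential role.

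Then I would invoke equidistribution. Since $\{\log p : p \text{ prime}\}$ is linearly independent over $\mathbb{Q}$, Weyl's theorem implies that the curve $\tau \mapsto \bigl((\tau \log p)/(2\pi) \bmod 1\bigr)_{p\le N}$ is equidistributed in $[0,1)^{\pi(N)}$. Hence for any $\delta > 0$, the set
\[
A_{T,\delta} = \Bigl\{ \tau \in [0,T] : \max_{p\le N}\ \bigl\|(\tau \log p)/(2\pi) - \theta_p^{*}\bigr\|_{\mathbb{R}/\mathbb{Z}} < \delta \Bigr\}
\]
has positive lower density as $T \to \infty$. Continuity of $\zeta_N$ in $\bar\theta$ gives $\sup_{s\in\mathcal{K}}|\log\zeta_N(s+i\tau) - \log\zeta_N(s;\bar\theta^{*})| < \varepsilon/3$ on $A_{T,\delta}$ for $\delta$ small enough. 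Finally, a mean-square tail estimate of the shape
\[
\frac{1}{T}\int_{0}^{T} \sup_{s\in\mathcal{K}}\bigl|\log\zeta(s+i\tau) - \log\zeta_N(s+i\tau)\bigr|^{2}\,d\tau \longrightarrow 0 \quad (N\to\infty),
\]
combined with Chebyshev's inequality, shows that a further positive-density subset of $A_{T,\delta}$ satisfies $\sup_{s\in\mathcal{K}}|\log\zeta(s+i\tau) - \log\zeta_N(s+i\tau)| < \varepsilon/3$, yielding the claim by the triangle inequality and exponentiation.

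The principal obstacle is the rearrangement step in the Hilbert space $H$: establishing that the tail spans of $\{v_p\}$ are dense in $H$ requires nontrivial input from prime number theory (to control $\sum_{p} \|v_p\|^2$ for $\sigma > 1/2$) and from function theory on $D$ (to rule out nontrivial orthogonal functionals, typically via a Hahn-Banach/duality argument invoking analytic functionals supported in $\overline{D}$). The other three steps, while technically involved, follow relatively standard patterns once this density is in hand.
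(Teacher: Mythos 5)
The paper offers no proof of this statement: it is Voronin's classical theorem, quoted from \cite{Vo} purely as background, and when the paper proves its own universality results it follows the Bagchi-style probabilistic route (weak convergence of $\mathcal{Q}_T$ to $\mathcal{Q}$ in Proposition~\ref{proposition1} plus identification of the support of $\mathcal{Q}$ in Proposition~\ref{proposition2}) rather than the argument you outline. Your sketch is the classical Voronin--Pechersky proof: Mergelyan reduction to $\exp$ of a polynomial, a rearrangement theorem in a Bergman-type Hilbert space, Weyl/Kronecker equidistribution for the finite Euler product, and a mean-square tail bound. That route is legitimate and essentially equivalent; what the probabilistic reformulation buys is that the Pechersky step and the equidistribution step are packaged once and for all into a support computation and a limit theorem, which is why the paper can run the same machinery for general $\mathcal{L}\in\mathcal{S}'$ and for the iterated integrals $\tilde{H}_m$.

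Two points in your sketch are genuinely defective as stated. First, the hypothesis you give for the rearrangement theorem is wrong: density in $H$ of the closed linear span of every cofinite subsequence of $\{v_p\}$ does \emph{not} imply that the convergent rearranged (or unimodularly twisted) sums are dense. In $\ell^2$ take $v_n=a_ne_{k(n)}$ with each basis vector recurring infinitely often, $\sum_{n:k(n)=k}|a_n|\le C/k$ for each $k$, and $\sum_n|a_n|=\infty$, $\sum_n|a_n|^2<\infty$: every cofinite tail has dense span, yet every convergent rearrangement lies in the nowhere dense set $\{x:|x_k|\le C/k\}$. The correct hypothesis in Pechersky's theorem is that $\sum_p|\langle v_p,e\rangle|=\infty$ for every nonzero $e\in H$, and verifying this divergence (via the duality/analytic-functional argument you allude to, together with the prime number theorem) is the real content of the step; with only your stated hypothesis the argument fails. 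Second, the mean-square bound with $\sup_{s\in\mathcal{K}}|\log\zeta(s+i\tau)-\log\zeta_N(s+i\tau)|^2$ under the integral is not meaningful for all $\tau\in[0,T]$, because $\log\zeta(s+i\tau)$ is singular whenever $\zeta$ has a zero in the translated region; one must first discard a set of $\tau$ of density zero using a zero-density estimate such as $N(\sigma,T)=o(T)$ for $\sigma>1/2$ (exactly the role of (S5) and the sets $\mathcal{I}_{\mathcal{K}}(T)$, $\mathcal{X}_{\mathcal{K}}(T)$ in the paper), and work with an $L^2$ norm over a slightly larger domain before recovering the supremum on $\mathcal{K}$ by a Cauchy-integral or subharmonicity argument. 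With those two repairs your outline becomes the standard proof.
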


This universality theorem has been improved and extended in various zeta-functions and $L$-functions. 
For example, Laurin\v{c}ikas and Matsumoto \cite{LM} proved the universality theorem for autmorphic $L$-function attached to cusp forms for $SL(2, \mathbb{Z})$ using the new method which is called the positive density method. 
As another extension, Endo \cite{En} proved the universality theorem for $\tilde{\eta}_m(s)$.
In this paper, we extend Endo's result to the Selberg class.

The Selberg class $\mathcal{S}$ is defined by Selberg \cite{Se} which is the class of Dirichlet series $\mathcal{L}(s) = \sum_{n=1}^{\infty} a(n) n^{-s} $ satisfying the following assumptions.

\begin{itemize} 
 \item[(S1)] Ramanujan hypothesis: $a(n) \ll_{\varepsilon} n^{\varepsilon}$ for any $\varepsilon > 0$.
 \item[(S2)] Analytic continuation: There exists a nonnegative integer $k$ such that $(s-1)^k \mathcal{L}(s)$ is an entire function of finite order.
 \item[(S3)] Functional equation: $\mathcal{L}(s)$ satisfies a functional equation of type 
 \[
 \Phi_{\mathcal{L}}(s) = \omega \overline{\Phi_{\mathcal{L}}(1 - \overline{s})},
 \]
 where 
 \begin{equation*}
 \Phi_{\mathcal{L}}(s) = \mathcal{L}(s)Q^s \prod_{j=1}^{f}\Gamma(\lambda_j s + \mu_j),
 \end{equation*}
 with positive real numbers $Q$, $\lambda_j$ and 
 complex numbers $\mu_j$ and $\omega$ with $\Re(\mu_j) \ge 0$ and $|\omega| =1$.
 \item[(S4)] Euler product: $\mathcal{L}(s)$ has a product representation 
 \[
 \mathcal{L}(s) = \prod_{p} \mathcal{L}_{p}(s),
 \] 
 where 
 \[
 \log{\mathcal{L}_{p}}(s) = \sum_{k=1}^{\infty} \frac{b(p^k)}{p^{ks}}
 \]
 with suitable coefficients $b(p^k)$ satisfying $b(p^k) \ll p^{k\theta}$ for some $0 \le \theta < 1/2$. 
\end{itemize}

The degree of $\mathcal{L} \in \mathcal{S}$ is defined by 
\[
d_{\mathcal{L}} = 2 \sum_{j=1}^{f} \lambda_j.
\]
This is unique although the functional equation is not since the analogue of the Riemann--von Mangoldt formula of $\mathcal{L}$ holds and $d_\mathcal{L}$ is contained in the coefficient of the main term of this formula (cf. \cite[Theorem~7.7]{St}). 
The following universality theorem for $L$-functions in the Selberg class can be proved by applying the positive density method.

\begin{theorem} [Nagoshi--Steuding \cite{NS}] \label{NS uni}
Let $\mathcal{L} \in \mathcal{S}$ satisfying
\[
\lim_{x \to \infty} \frac{1}{\pi(x)} \sum_{p \le x} |a(p)|^2 = \kappa,
\]
where $\kappa$ is some positive constant(may depend on $\mathcal{L}$). 
Let $\mathcal{K}$ be a compact set in the strip $\max\left\{\frac{1}{2}, 1 - \frac{1}{d_\mathcal{L}} \right\} < \sigma < 1$ with connected complement, and let $f(s)$ be a non-vanishing continuous function on $\mathcal{K}$ that is analytic in the interior of $\mathcal{K}$. Then, for any $\varepsilon > 0$ 
\[
\liminf_{T \to \infty} \frac{1}{T} \meas \left\{\tau \in [0, T] :  \sup_{s \in \mathcal{K}} |\mathcal{L}(s + i\tau) -f(s)| < \varepsilon \right\} > 0.
\]
\end{theorem}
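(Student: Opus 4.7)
The plan is to combine Bagchi's probabilistic approach to universality with the positive density method of Laurin\v{c}ikas and Matsumoto. Embed $\mathcal{K}$ in an open disk $D$ contained in the strip $\max(1/2, 1-1/d_\mathcal{L}) < \sigma < 1$ and work in the space $H(D)$ of holomorphic functions on $D$ with the topology of uniform convergence on compact subsets. Let $\Omega = \prod_p \{|\omega_p| = 1\}$ carry the normalized Haar measure $\mathbb{P}$, and for $\omega \in \Omega$ define $\mathcal{L}(s, \omega) = \prod_p \mathcal{L}_p(s, \omega_p)$, where $\mathcal{L}_p(s, \omega_p)$ is obtained by replacing $p^{-s}$ with $\omega_p p^{-s}$ in the Euler factor; under (S1) and (S4), Kolmogorov's three-series theorem gives $\mathbb{P}$-almost sure convergence of this random Euler product in $H(D)$.

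The first step is a functional limit theorem: the probability measures $P_T(A) = T^{-1}\meas\{\tau \in [0,T] : \mathcal{L}(\cdot + i\tau) \in A\}$ converge weakly on $H(D)$ to the law $P_\mathcal{L}$ of $\mathcal{L}(\cdot, \omega)$. Convergence of finite-dimensional distributions reduces, via Weyl's equidistribution criterion, to the $\mathbb{Q}$-linear independence of $\{\log p\}_p$, and tightness of $\{P_T\}$ follows from a second moment bound $\int_0^T |\mathcal{L}(\sigma + it)|^2\, dt \ll T$ valid for $\sigma > \max(1/2, 1-1/d_\mathcal{L})$. This Selberg class mean square estimate is precisely what pins down the lower endpoint of the universality strip.

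The second and most delicate step is identifying $\supp P_\mathcal{L}$ with the $H(D)$-closure of the family of functions that are analytic and non-vanishing on $\mathcal{K}$ and continuous on $\partial \mathcal{K}$. Using Mergelyan's theorem, which is applicable since $\mathcal{K}^{c}$ is connected, one replaces $f$ by $e^{g}$ for some polynomial $g$; it then suffices to show that the range of $\omega \mapsto \log \mathcal{L}(\cdot, \omega) = \sum_{p} a(p)\, \omega_p\, p^{-s} + (\text{a.s.\ convergent remainder})$ is dense in $H(D)$. Here the hypothesis $\pi(x)^{-1} \sum_{p \le x} |a(p)|^2 \to \kappa > 0$ enters decisively: it forces a set $M$ of primes of positive lower density on which $|a(p)| \ge \delta$ for some fixed $\delta > 0$, providing enough non-degenerate Fourier modes that a Hilbert-space rearrangement argument of Pechersky type yields the required density.

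The main obstacle is precisely this support identification. One must verify, using only the averaged input supplied by the positive density hypothesis, both Pechersky's summability condition and his density-of-rearrangements condition inside an appropriate Hilbert subspace of $H(D)$; the argument splits into a contribution from $p \in M$, where the rearrangement theorem produces the target polynomial $g$, and a contribution from $p \notin M$, which must be absorbed into a small error using the elementary expansion $\log \mathcal{L}_p(s) = a(p) p^{-s} + O(p^{-2\sigma + 2\theta})$ together with the Ramanujan bound. Once the support is known, the conclusion is immediate: the set $\{F \in H(D) : \sup_{s \in \mathcal{K}} |F(s) - f(s)| < \varepsilon\}$ is an open neighborhood of a point of $\supp P_\mathcal{L}$, so has positive $P_\mathcal{L}$-mass, and the portmanteau direction of $P_T \Rightarrow P_\mathcal{L}$ delivers the positive $\liminf$ claimed.
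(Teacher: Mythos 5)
The paper does not actually prove this theorem: it is quoted from Nagoshi--Steuding \cite{NS}, with only the remark that it is obtained by the positive density method and that the mean-value bound $\int_0^T|\mathcal{L}(\sigma+it)|^2\,dt\ll T$ is what restricts the strip to $\max\{1/2,1-1/d_\mathcal{L}\}<\sigma<1$. Your outline --- Bagchi-type functional limit theorem with tightness from that mean-square estimate, plus the Laurin\v{c}ikas--Matsumoto positive density method and a Pechersky rearrangement argument for the support, using (S6) to produce a positive lower density of primes with $|a(p)|\ge\delta$ --- is precisely the approach of \cite{NS}; the only cosmetic slip is that a tall compact set $\mathcal{K}$ need not fit in a disk contained in the strip, so $D$ should be taken to be a bounded rectangle or simply connected region rather than a disk.
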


Therorem~\ref{NS uni} shows the universality for $L$-function in the Selberg class only for $\max\left\{\frac{1}{2}, 1 - \frac{1}{d_\mathcal{L}} \right\} < \sigma < 1$ because we require mean value theorem for $L$-function in the Selberg class such that
\[
\int_{0}^{T} |\mathcal{L}(\sigma + it)|^2\, dt \ll T,\ T \to \infty
\]
to prove this theorem but we know this formula only for $\max\left\{\frac{1}{2}, 1 - \frac{1}{d_\mathcal{L}} \right\} < \sigma < 1$.

We see that $\mathcal{L}(s) = \sum_{n=1}^{\infty} a(n) n^{-s}$ is absolutely convergence for $\sigma > 1$ from (S1), and for $\sigma > 1$, 
\begin{equation} \label{Dirichlet rep}
-\frac{\mathcal{L}'}{\mathcal{L}}(s) = \sum_{n=1}^{\infty} \frac{\Lambda_{\mathcal{L}}(n)}{n^{s}},
\end{equation}
where 
\begin{align*}
\Lambda_{\mathcal{L}}(n) &= 
\begin{cases}
b(p^k)\log p^k & (n = p^k, \text{where $p$ is a prime and $k$ is a positive integer}), \\
0 & (\text{otherwise})
\end{cases}
\end{align*}
which can be shown by (S4) and logarithmic differentiation of $\mathcal{L}$

We define the iterated integrals of the logarithm of $L$-function in the Selberg class $\tilde{H}_m(s)$ as 
\[
\tilde{H}_{m}(\sigma + it) = \int_{\sigma}^{\infty} \tilde{H}_{m-1}(\alpha + it)\, d\alpha,
\]
\[
\tilde{H}_{0}(\sigma + it) =\log{\mathcal{L}(\sigma + it)} =  \int_{\infty}^{\sigma} \frac{\mathcal{L}'}{\mathcal{L}}(\alpha + it)\, d\alpha
\]
for $m \ge 1$ and $s \in G_\mathcal{L}$. 
Here $G_\mathcal{L}$ is defined by 
\[
G_{\mathcal{L}} = \mathbb{C} \setminus \left\{ \left(\bigcup_{\rho_{\mathcal{L}} = \beta_{\mathcal{L}} + i \gamma_{\mathcal{L}}} \{s = \sigma + i \gamma_{\mathcal{L}} \ |\ \sigma \le \beta_\mathcal{L} \}\right) \cup I_\mathcal{L} \right\}, 
\]
\begin{align*}
I_{\mathcal{L}} &= 
\begin{cases}
(-\infty, 1] & (\mbox{$\mathcal{L}$ has a pole at $s=1$}), \\
\emptyset & (\mbox{$\mathcal{L}$ is an entire}),
\end{cases}
\end{align*}
where $\rho_\mathcal{L} = \beta_\mathcal{L} + i\gamma_\mathcal{L}$ runs over zeros of $\mathcal{L}(s)$.
It follows from (\ref{Dirichlet rep}) that  
\[
\tilde{H}_{m}(s) = \sum_{n=2}^{\infty} \frac{\Lambda_{\mathcal{L}}(n)}{(\log n)^{m+1}n^s}
\]
for $\sigma > 1$, and this is absolutely convergent for $\sigma > 1$. 

Let $\mathcal{S}'$ be the subclass, which is characterized by the following two additional assumptions for the Selberg class. 

\begin{itemize}
 \item[(S5)] Zero density estimate: Let $T > 0$, and $N_\mathcal{L}(\sigma, T)$ denotes the number of zeros $\rho_\mathcal{L} = \beta_\mathcal{L} + i\gamma_\mathcal{L}$ of $\mathcal{L}$ satisfying $\beta_\mathcal{L} > \sigma$ and $|\gamma_\mathcal{L}| < T$.
 Then, there exists $\sigma_\mathcal{L} \ge 1/2$ such that
 \begin{equation}
 N_\mathcal{L}(\sigma, T) = o_\mathcal{L} \left(\frac{T}{(\log{T})^\alpha}\right)
 \label{zero density}
 \end{equation}
 uniformily for $\sigma_\mathcal{L} < \sigma < 1$ for any $\alpha > 0$
 \item[(S6)] Prime mean-square: There exists a constant $\kappa = \kappa(\mathcal{L}) > 0$ such that 
 \begin{equation}
\lim_{x \to \infty} \frac{1}{\pi(x)} \sum_{p \le x} |a(p)|^2 = \kappa.
\end{equation}
\end{itemize} 
We expect that the following conjecture is true. 

\begin{conjecture} [Grand Density Hypothesis]
There is some positive constant $c_\mathcal{L}$ such that for $\sigma > 1/2$
\begin{equation} \label{GDH}
N_\mathcal{L}(\sigma, T) \ll T^{1 - c_\mathcal{L}(\sigma - 1/2) + \varepsilon}.
\end{equation}
\end{conjecture}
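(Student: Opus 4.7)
The statement labelled (\ref{GDH}) is a Grand Density Hypothesis for the Selberg class and is, as the paper itself makes clear, a conjecture; I would not expect to settle it unconditionally. The most honest plan is to sketch the standard Ingham--Huxley--Heath-Brown zero-detection strategy adapted to $\mathcal{L} \in \mathcal{S}$, and to record precisely where the argument breaks.

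\textbf{Step 1 (Zero detection via a mollifier).} For $T \le |\gamma| \le 2T$ and a zero $\rho = \beta + i\gamma$ with $\beta > 1/2$, I would combine an approximate functional equation for $\mathcal{L}$, which is available from the Selberg axioms (S2)--(S4) with effective length $\asymp |t|^{d_\mathcal{L}/2}$, with a truncated Dirichlet mollifier $M_Y(s) = \sum_{n \le Y} \mu_\mathcal{L}(n) n^{-s}$, where $\mu_\mathcal{L}$ is the Dirichlet-convolution inverse of $a(n)$ (well-defined on prime powers via (S4)). Substituting at $s = \rho$ and using $\mathcal{L}(\rho) = 0$ yields an identity of the form $1 = A(\rho) + B(\rho)$, where $A, B$ are short Dirichlet polynomials in $n^{-\rho}$. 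Hence at every non-trivial zero in the rectangle either $|A(\rho)| \ge 1/2$ or $|B(\rho)| \ge 1/2$, reducing the zero count to a large-values count for two Dirichlet polynomials.

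\textbf{Step 2 (Large values and mean-value input).} I would apply a Halász--Montgomery large-value estimate together with the standard mean value theorem for Dirichlet polynomials, using the Ramanujan bound (S1) to control $\ell^2$-norms of coefficients. Optimising the mollifier length $Y$ as a function of $T$ and $\sigma$ should then yield a bound of the shape (\ref{GDH}), provided one has (i) a second moment
\[
\int_{0}^{T} |\mathcal{L}(1/2 + it)|^2\, dt \ll_{\mathcal{L}, \varepsilon} T^{1+\varepsilon},
\]
and (ii) a corresponding mean fourth power estimate for the mollified product $\mathcal{L} M_Y$ on the critical line.

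\textbf{Main obstacle.} Input (i) is exactly what is not available from the Selberg axioms alone: the known mean square estimate holds only for $\sigma > \max\{1/2, 1 - 1/d_\mathcal{L}\}$, which is precisely the limitation appearing in Theorem~\ref{NS uni}. Pushing the second moment all the way down to $\sigma = 1/2$ is essentially of Lindelöf strength for $\mathcal{L}$, and is not known for general $\mathcal{L} \in \mathcal{S}$ of degree $d_\mathcal{L} \ge 2$. Absent functorial input (e.g. an identification of $\mathcal{L}$ with an automorphic $L$-function on $GL_n$), there seems to be no route within the axioms (S1)--(S4) to obtain (i). For this reason (\ref{GDH}) is stated here as a conjecture, and one expects it to be used only as a working hypothesis from which unconditional consequences on restricted strips are extracted.
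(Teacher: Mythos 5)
You have correctly identified that this statement is a conjecture: the paper offers no proof of it, and only uses it as a working hypothesis under which one may take $\sigma_{\mathcal{L}} = 1/2$ in (S5); the unconditional substitute the paper actually relies on is the Kaczorowski--Perelli bound $N_\mathcal{L}(\sigma,T) \ll T^{4(d_\mathcal{L}+3)(1-\sigma)+\varepsilon}$, giving $\sigma_\mathcal{L} = 1 - \frac{1}{4(d_\mathcal{L}+3)}$. Your sketch of the Ingham--Hal\'asz zero-detection strategy and your diagnosis of the obstruction --- the absence of a critical-line second moment for general $\mathcal{L}$ of degree $d_\mathcal{L} \ge 2$, which is the same limitation behind Theorem~\ref{NS uni} --- are accurate, and your decision not to claim a proof is the right one.
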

Then, assuming the Grand density hypothesis, we can take $\sigma_\mathcal{L} = 1/2$. 

When $\mathcal{L} \in \mathcal{S}'$, we define the strip $\mathcal{D}_\mathcal{L} = \{s = \sigma + it : \sigma_\mathcal{L} < \sigma < 1\}$. 
Now we state the main theorem of this paper. 

\begin{theorem} \label{main theorem} 
Let $\mathcal{L} \in \mathcal{S}'$ and $m$ be a nonnegative integer.
Let $\mathcal{K}$ be a compact set in $\mathcal{D}_\mathcal{L}$ with connected complement, and let $g(s)$ be a continuous function on $\mathcal{K}$ that is analytic in the interior of $\mathcal{K}$. Then, for any $\varepsilon > 0$, 
\[
\liminf_{T \to \infty} \frac{1}{T} \meas \left\{\tau \in [T, 2T] :  \sup_{s \in \mathcal{K}} |\tilde{H}_{m}(s +i\tau) - g(s)| < \varepsilon \right\} >0.
\]
\end{theorem}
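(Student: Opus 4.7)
The plan is to follow the Bagchi-style universality framework, as Endo does for $\tilde{\eta}_m$ in the Riemann case \cite{En}, but replacing input about $\zeta$ with the Selberg-class assumptions (S5) and (S6). The scheme has four steps: a mean-value approximation of $\tilde{H}_m(\cdot + i\tau)$ by a smoothed Dirichlet polynomial on compacta of $\mathcal{D}_\mathcal{L}$; a weak-limit theorem in the space $H(\mathcal{D}_\mathcal{L})$ of analytic functions; a support theorem identifying the limit measure as supported on all of $H(\mathcal{D}_\mathcal{L})$; and Mergelyan's theorem together with the portmanteau theorem to conclude. Since $\tilde{H}_m$ is built additively from $\log \mathcal{L}$ via iterated integration, the target $g$ need not be non-vanishing.

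For the first two steps, starting from the absolutely convergent representation
\[
\tilde{H}_m(s) = \sum_p \sum_{k \ge 1} \frac{b(p^k)}{k^m (\log p)^m p^{ks}} \qquad (\sigma > 1),
\]
I would insert a smooth cutoff $v_X(n) = \exp(-(n/X)^\theta)$ and compare $\tilde{H}_m$ with its truncation $\tilde{H}_{m,X}$ via a Mellin-transform contour shift. The extra $(\log n)^{m+1}$ in the denominator accelerates tail convergence, and the zero-density estimate (S5) controls the contribution of exceptional zeros, yielding
\[
\lim_{X \to \infty} \limsup_{T \to \infty} \frac{1}{T} \int_T^{2T} \sup_{s \in \mathcal{K}_1} |\tilde{H}_m(s+i\tau) - \tilde{H}_{m,X}(s+i\tau)|\, d\tau = 0
\]
for each compact $\mathcal{K}_1 \subset \mathcal{D}_\mathcal{L}$. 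Combined with the ergodicity of the flow $\tau \mapsto (p^{-i\tau})_p$ on the torus $\Omega = \prod_p S^1$, this gives weak convergence $P_T \Rightarrow P$ of the measures on $H(\mathcal{D}_\mathcal{L})$ induced by $\tilde{H}_m(\cdot + i\tau)$ with $\tau$ uniform on $[T, 2T]$, where $P$ is the law of the random analytic function
\[
\tilde{H}_m(s,\omega) = \sum_p \sum_{k \ge 1} \frac{b(p^k) \omega(p)^k}{k^m (\log p)^m p^{ks}},\qquad \omega \in \Omega
\]
(a.s.\ convergent on $\mathcal{D}_\mathcal{L}$ by (S4) and (S6)).

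The main obstacle is the support theorem: one must show $\supp P = H(\mathcal{D}_\mathcal{L})$. The prime-power contributions ($k \ge 2$) are absolutely convergent by $\theta < 1/2$ in (S4), so the question reduces to the leading series
\[
\sum_p \frac{a(p)\, \omega(p)}{(\log p)^m p^s}.
\]
Here (S6), in the form $\sum_{p \le x} |a(p)|^2 \sim \kappa\, \pi(x)$, supplies exactly the input needed for a Pechersky/Bagchi-type conditional-rearrangement argument in the Bergman space of square-integrable analytic functions on an exhausting sequence of compacta of $\mathcal{D}_\mathcal{L}$: the summands tend to zero in norm but their norms are not summable, and their phases $\omega(p)$ are free, so any element of the space is a limit of suitable rearrangements. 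This is the technical heart; one must verify that the standard density argument still goes through in the presence of the logarithmic weight $(\log p)^{-m}$ and in the enlarged strip $\sigma_\mathcal{L} < \sigma < 1$ (rather than $1 - 1/d_\mathcal{L} < \sigma < 1$ as in Theorem~\ref{NS uni}).

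Given $g$ and $\varepsilon > 0$, Mergelyan's theorem produces a polynomial $q(s)$ with $\sup_\mathcal{K} |g - q| < \varepsilon/2$. The support theorem gives positive $P$-measure to the open set $\{h \in H(\mathcal{D}_\mathcal{L}) : \sup_\mathcal{K} |h - q| < \varepsilon/2\}$, and applying the portmanteau theorem to $P_T \Rightarrow P$ yields
\[
\liminf_{T \to \infty} \frac{1}{T} \meas\Bigl\{\tau \in [T,2T] : \sup_{s \in \mathcal{K}} |\tilde{H}_m(s+i\tau) - g(s)| < \varepsilon\Bigr\} > 0,
\]
completing the proof.
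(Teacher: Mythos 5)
Your overall architecture is the same as the paper's: a Bagchi-type scheme with a smoothed Dirichlet-polynomial approximation in the mean, a weak limit theorem obtained by pushing forward equidistribution on the torus $\prod_p \gamma_p$, a support theorem whose technical heart is a Nagoshi--Steuding-style denseness lemma driven by (S6), and Mergelyan plus portmanteau at the end. The reduction of the support question to the $k=1$ series (the $k\ge 2$ part being absolutely convergent since $\theta<1/2$) and the observation that $g$ need not be non-vanishing both match the paper. Two remarks on the differences, one cosmetic and one substantive.

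The cosmetic one: you mollify with $v_X(n)=\exp(-(n/X)^\theta)$, whereas the paper uses a \emph{compactly supported} smooth cutoff $\varphi(n/X)$. The paper's choice is not incidental: it makes $\tilde{H}_{m,X}$ a finite Dirichlet polynomial depending on finitely many primes, so the second step reduces immediately to weak convergence on a finite-dimensional torus (Lemma~\ref{lemma weak conv} plus the mapping theorem). With $\exp(-(n/X)^\theta)$ you must instead run the limit theorem for an absolutely convergent series over the infinite torus, which is standard (Laurin\v{c}ikas) but adds a layer you have not accounted for.

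The substantive gap: for a positive proportion of $\tau\in[T,2T]$ near ordinates of zeros $\rho_\mathcal{L}=\beta_\mathcal{L}+i\gamma_\mathcal{L}$ with $\beta_\mathcal{L}>\sigma_0$, the function $s\mapsto\tilde{H}_m(s+i\tau)$ is not holomorphic (indeed not defined) on a neighbourhood of $\mathcal{K}$, because $\mathcal{K}+i\tau$ meets the horizontal cuts excluded from $G_\mathcal{L}$. Consequently your measures $P_T$, defined by pushing forward the uniform measure on all of $[T,2T]$ into $H(\mathcal{D}_\mathcal{L})$, are simply not well defined, and your displayed mean-value estimate with $\int_T^{2T}\sup_{s\in\mathcal{K}_1}|\cdots|\,d\tau$ involves an integrand that is infinite or undefined on the bad set. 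This is exactly where (S5) enters structurally in the paper, not merely as an error-term control: one excises $\Delta$-neighbourhoods of the relevant ordinates to form the set $\mathcal{I}_\mathcal{K}(T)$ (and the larger excision $\mathcal{X}_\mathcal{K}(T)$ with $\Delta\asymp Y(T)$ for the approximation lemma), proves $\meas(\mathcal{I}_\mathcal{K}(T))\sim T$ from the zero-density estimate, defines $\mathcal{Q}_T$ by normalising over $\mathcal{I}_\mathcal{K}(T)$ only, and works throughout on a bounded rectangle $\mathcal{R}\supset\mathcal{K}$ rather than on the full strip $\mathcal{D}_\mathcal{L}$. Since the excised set has measure $o(T)$, the final lower bound over $[T,2T]$ survives. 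Your proof needs this excision made explicit; as written, the second step does not get off the ground.
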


Kaczorowski and Perelli \cite{KP} proved 
\[
 N_\mathcal{L}(\sigma, T) \ll T^{4(d_\mathcal{L} + 3)(1 - \sigma) +\varepsilon}
\]
uniformly for $1/2 \le \sigma < 1$ unconditionally, so we can put 
\[
\sigma_\mathcal{L} = 1 - \frac{1}{4(d_\mathcal{L} + 3)}.
\]
Therefore, we can prove the universality theorem for 
$\tilde{H}_m(s)$ for $1 - \frac{1}{4(d_\mathcal{L} + 3)} < \sigma < 1$ under only (S6). 

\begin{remark} 
We can take $\sigma_\zeta = 1/2$ (for example see \cite[Theorem~9.19 A]{Ti}). 
Therefore Theorem~\ref{main theorem} contains Endo's result.
\end{remark}

Using Theorem~\ref{main theorem} as $m = 0$ under the Grand density hypothesis, we can prove the universality for $L$-function in the Selberg class for $1/2 < \sigma < 1$.
 
\begin{theorem} \label{main2}
Let $\mathcal{L} \in \mathcal{S}$ satisfying the Grand density hypothesis and $(S6)$.
Let $\mathcal{K}$ be a compact set in the strip $1/2 < \sigma < 1$ with connected complement, and let $f(s)$ be a non-vanishing continuous function on $\mathcal{K}$ that is analytic in the interior of $\mathcal{K}$. Then, for any $\varepsilon > 0$ 
\[
\liminf_{T \to \infty} \frac{1}{T} \meas \left\{\tau \in [T, 2T] :  \sup_{s \in \mathcal{K}} |\mathcal{L}(s + i\tau) - f(s)| < \varepsilon \right\} > 0.
\]
\end{theorem}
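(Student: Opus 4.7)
The plan is to reduce Theorem~\ref{main2} to the $m=0$ case of Theorem~\ref{main theorem} by taking a logarithm of $f$ and then exponentiating. Under the Grand Density Hypothesis together with (S6) one has $\sigma_\mathcal{L} = 1/2$, so $\mathcal{L} \in \mathcal{S}'$ with $\mathcal{D}_\mathcal{L} = \{s : 1/2 < \sigma < 1\}$. Hence any compact $\mathcal{K}$ admissible in Theorem~\ref{main2} is automatically admissible in Theorem~\ref{main theorem}.

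First I would produce a continuous branch of $\log f$. Since $\mathcal{K}$ is compact with connected complement in $\mathbb{C}$ and $f$ is continuous and non-vanishing on $\mathcal{K}$, a classical argument (using Mergelyan's theorem to approximate $f$ uniformly on $\mathcal{K}$ by polynomials $p_n$; for $n$ large these $p_n$ are non-vanishing on $\mathcal{K}$ and, since the zeros of $p_n$ lie in the connected open set $\mathbb{C} \setminus \mathcal{K}$, admit a holomorphic logarithm on a simply connected neighbourhood of $\mathcal{K}$; passing to the limit after fixing branches) yields a continuous function $g : \mathcal{K} \to \mathbb{C}$ with $\exp(g(s)) = f(s)$, and $g$ is analytic in the interior of $\mathcal{K}$ because $f$ is.

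Next I would apply Theorem~\ref{main theorem} with $m=0$ to this $g$. For every $\delta > 0$ this gives
\[
\liminf_{T \to \infty} \frac{1}{T} \meas\left\{ \tau \in [T, 2T] : \sup_{s \in \mathcal{K}} |\log\mathcal{L}(s + i\tau) - g(s)| < \delta \right\} > 0,
\]
where $\log\mathcal{L}(s+i\tau) = \tilde{H}_0(s+i\tau)$ is understood via the branch coming from the definition of $\tilde{H}_0$ on $G_\mathcal{L}$. Smallness of the supremum forces $\mathcal{K} + i\tau \subset G_\mathcal{L}$ for the relevant $\tau$, so the expression makes sense on this positive-density set.

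Finally I would exponentiate pointwise. Writing $M = \sup_{s \in \mathcal{K}} |f(s)|$ and using $|e^{w} - 1| \le |w|e^{|w|}$, for $\tau$ in the above set,
\[
\sup_{s \in \mathcal{K}} |\mathcal{L}(s+i\tau) - f(s)| = \sup_{s \in \mathcal{K}} |f(s)|\,|e^{\log\mathcal{L}(s+i\tau) - g(s)} - 1| \le M\,\delta\,e^{\delta}.
\]
Choosing $\delta$ so small that $M\delta e^{\delta} < \varepsilon$ finishes the proof. The only step with any subtlety is the production of a continuous $\log f$ under the connected-complement hypothesis; this is folklore in the universality literature, and once it is in place the rest is a one-line exponentiation.
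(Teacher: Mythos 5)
Your proposal is correct and is exactly the reduction the paper intends: the paper gives no separate proof of Theorem~\ref{main2}, merely noting that it follows from Theorem~\ref{main theorem} with $m=0$ once the Grand Density Hypothesis allows $\sigma_\mathcal{L}=1/2$, and your argument (construct a continuous branch of $\log f$ via Mergelyan, apply universality for $\tilde{H}_0=\log\mathcal{L}$, then exponentiate) is the standard way to fill in that step. No gaps.
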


\begin{remark}
Mishou \cite{Mi} proved the universality for Hecke $L$-function of algebraic number field $K$ 
in the strip $1 - d^{-1} < \sigma < 1$, where $K \ne \mathbb{Q}$, and $d = [K : \mathbb{Q}]$. 
Lee \cite{Le} proved the same universality result for $1/2 < \sigma < 1$ under the zero density estimate similar to (\ref{GDH}). 
Hecke $L$-function is in the Selberg class, so Theorem~\ref{main2} is analogue of Lee's result. 
However, own proof is quite different from Lee's proof.
\end{remark}

\section{PRELIMINARIES}

Let $\mathcal{L} \in \mathcal{S}'$, and we fix compact subset $\mathcal{K}$ satisfying the assumptions of Theorem~\ref{main theorem}. 
We introduce the following notations.
\begin{itemize}
 \item $|\mathcal{K}| = \max_{s \in \mathcal{K}} \Im(s) - \min_{s \in \mathcal{K}} \Im(s)$.
 \item $\tau_0 = \tau_0(\mathcal{K}) = (\max_{s \in \mathcal{K}}\Im(s) + \min_{s \in \mathcal{K}} \Im(s))/2$. 
 \item $\sigma_0 =\sigma_0(\mathcal{K}) = (\sigma_\mathcal{L} + \min_{s \in \mathcal{K}} \Re(s))/2$.
 \item For any $\Delta >0$, let
 \begin{equation}
 \mathcal{G}_{\sigma_0, \Delta} = \mathbb{R} \setminus \left\{ \left( \bigcup_{\substack{\rho_\mathcal{L} = 
  \beta_\mathcal{L} + i\gamma_\mathcal{L} \\ \beta_\mathcal{L} > \sigma_0}} (\gamma_\mathcal{L} 
  -\tau_0(\mathcal{K})- \Delta, \gamma_\mathcal{L} - \tau_0(\mathcal{K}) + \Delta) \right) 
  \cup   (-\tau_0(\mathcal{K}) - \Delta, -\tau_0(\mathcal{K}) + \Delta ) \right\}.
 \end{equation}

 \item For any $T >0$, let $\mathcal{I}_\mathcal{K}(T) = \mathcal{G}_{\sigma_0, |\mathcal{K}| +1} \cap [T, 2T]$. 
\end{itemize}

By the zero density estimate (S5), we have $\meas(\mathcal{I}_\mathcal{K}(T)) \sim T$ as $T \to \infty$. 
If $\tau \in \mathcal{G}_{\sigma_0, |\mathcal{K}| + 1}$, then $\mathcal{K} +i\tau$ is in $G_\mathcal{L}$.
Therefore, for $\tau \in \mathcal{G}_{\sigma_0, |\mathcal{K}| + 1}$, 
$\tilde{H}_m(s + i\tau)$ is holomorphic in $s \in \mathcal{K}$. 

We fix $\sigma_1,\ \sigma_2$ such that  
\[
\sigma_\mathcal{L} < \sigma_0 < \sigma_{1} < \min_{s \in \mathcal{K}} \Re(s),\ \max_{s \in \mathcal{K}} \Re(s) < \sigma_{2} < 1.
\]

Then, for any compact subset $\mathcal{K}$, we define the rectangle region $\mathcal{R}$ by 
\begin{equation} \label{definition of R}
\mathcal{R} = (\sigma_1,\ \sigma_2) \times i \left( \min_{s \in \mathcal{K}} \Im(s) - 1/2,\ \max_{s \in \mathcal{K}}\Im(s) + 1/2 \right).
\end{equation}
Let $\mathcal{H}(\mathcal{R})$ be the set of all holomorphic functions on $\mathcal{R}$. 
Then there exists a sequence of compact subsets $K_j$ of $\mathcal{R}$, $j=1, 2, \dots$ with the properties: 
\begin{itemize} 
 \item $\mathcal{R} = \bigcup_{j=1}^{\infty} K_j$, 
 \item $K_j \subset K_{j+1}$ for any $j \in \mathbb{N}$,
 \item For all compact subset $K$ of $\mathcal{R}$, there exists $j \in \mathbb{N}$ such that $K \subset K_j$ 
\end{itemize}
(See \cite[Chapter VII, 1.2 Proposition]{Co}).
Now, for $f, g \in \mathcal{H}(\mathcal{R})$, let $d_j(f, g) = \sup_{s \in K_j} |f(s) - g(s) |$ and put  
\[
d(f, g) = \sum_{j=1}^{\infty} \frac{1}{2^j} \frac{d_j(f, g)}{1 + d_j(f, g)}.
\]
Then $\mathcal{H}(\mathcal{R})$ is a metric space where $d$ is a metric on $\mathcal{H}(\mathcal{R})$,
and in particular, the metric space $\mathcal{H}(\mathcal{R})$ is a topological space with a topology of compact convergence. 
We put 
$\gamma = \{ s \in \mathbb{C} : |s| = 1 \}$ and write $\mathcal{B}(T)$ for the Borel set of $T$ which is a topological space.  

For any prime $p$, we put $\gamma_p = \gamma$ and  $\Omega = \prod_p \gamma_p.$
Since $\Omega$ is a compact topological abelian group, there exists the probability Haar measure $\m$ on $(\Omega, \mathcal{B}(\Omega))$. 
Then $\m$ is written by $\m = \otimes_p \m_p$, where $\m_p$ is the probability Haar measure on $(\gamma_p, \mathcal{B}(\gamma_p))$.
 
Let $\omega(p)$ be the projection of $\omega \in \Omega$ to the coordinate space $\gamma_p$.
$\{\omega(p) : \text{$p$ prime}\}$ is a sequence of independent complex-valued random elements defined on the probability space $(\Omega, \mathcal{B}(\Omega), \m)$. 
For $\omega \in \Omega$, we put $\omega(1) :=1$, 
\[
\omega(n) := \prod_p \omega(p)^{\nu(n ; p)}, 
\]
where $\nu(n ; p)$ is the exponent of the prime $p$ in the prime factorization of $n$.
Here, we define the $\mathcal{H}(\mathcal{R})$-valued random elements
\begin{equation}
\begin{split}
\tilde{H}_m(s, \omega) &:= \sum_{n=2}^{\infty} \frac{\Lambda_{\mathcal{L}}(n) \omega(n)}{(\log n)^{m+1}n^s} \\
                     &= \sum_p \sum_{k=1}^{\infty} \frac{b(p^k) \omega(p)^k}{(k\log{p})^m p^{ks}}.
\label{definition H}
\end{split}
\end{equation}
We remark that the second equation holds in the sense of analytic continuation for $\sigma > 1/2$. 

We define the probability measures on $(\mathcal{H}(\mathcal{R}), \mathcal{B}(\mathcal{H}(\mathcal{R})))$ by 
\begin{equation*}
\mathcal{Q}_T(A) = \frac{1}{\meas({\mathcal{I}_\mathcal{K}(T)})} \meas \left\{ \tau \in \mathcal{I}_\mathcal{K}(T) : \tilde{H}_m(s +i\tau) \in A \right\}, 
\end{equation*}
\begin{equation*}
\mathcal{Q}(A) = \m \left\{\omega \in \Omega : \tilde{H}_m(s, \omega) \in A \right\}
\end{equation*} 
for $A \in \mathcal{B}(\mathcal{H}(\mathcal{R}))$. 

Then we prove the following two propositions to prove Theorem~\ref{main theorem}. 

\begin{proposition} \label{proposition1}
The probability measure $\mathcal{Q}_T$ converges weakly to $\mathcal{Q}$ as $T \to \infty$.
\end{proposition}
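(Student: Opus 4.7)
The plan is to use the Bagchi--Laurin\v{c}ikas strategy for weak convergence in $\mathcal{H}(\mathcal{R})$, adapted to the iterated integrals $\tilde{H}_m$, in three stages. First, I would prove equidistribution on the torus: defining $\phi_T \colon \mathcal{I}_\mathcal{K}(T) \to \Omega$ by $\phi_T(\tau) = (p^{-i\tau})_p$ and using the Kronecker--Weyl theorem together with the rational linear independence of $\{\log p\}_p$, the pushforward of normalized Lebesgue measure on $\mathcal{I}_\mathcal{K}(T)$ converges weakly to $\mathbf{m}$. The restriction from $[T,2T]$ to $\mathcal{I}_\mathcal{K}(T)$ is harmless because $\meas(\mathcal{I}_\mathcal{K}(T)) \sim T$ by (S5).

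Second, I would approximate $\tilde{H}_m$ by a smoothly truncated Dirichlet polynomial. Let $v_N$ be a smooth cutoff supported essentially on $n \lesssim N$ (for example $v_N(n) = \exp(-(n/N)^{\theta})$ with $\theta > 0$), and set
\[
\tilde{H}_{m,N}(s) = \sum_{n=2}^\infty \frac{\Lambda_\mathcal{L}(n)\, v_N(n)}{(\log n)^{m+1} n^s}, \quad \tilde{H}_{m,N}(s,\omega) = \sum_{n=2}^\infty \frac{\Lambda_\mathcal{L}(n)\, v_N(n)\, \omega(n)}{(\log n)^{m+1} n^s}.
\]
Since $\omega \mapsto \tilde{H}_{m,N}(\cdot,\omega)$ factors through finitely many coordinates of $\Omega$ up to an exponentially small tail in $N$, it is continuous from $\Omega$ to $\mathcal{H}(\mathcal{R})$, so Stage 1 and the continuous mapping theorem yield weak convergence of the truncated measures $\mathcal{Q}_{T,N}$ to $\mathcal{Q}_N$ as $T \to \infty$.

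Third, I would establish the uniform approximation
\[
\lim_{N \to \infty} \limsup_{T \to \infty} \frac{1}{T} \int_{\mathcal{I}_\mathcal{K}(T)} d\bigl(\tilde{H}_m(\cdot + i\tau), \tilde{H}_{m,N}(\cdot + i\tau)\bigr)\, d\tau = 0,
\]
together with the analogous bound in $\mathbf{m}$-mean. The random-side bound follows from orthogonality of $\{\omega(p)\}$, giving a second-moment estimate in which the factor $(\log n)^{-2(m+1)}$ combined with (S6) secures convergence. For the $t$-side I would use Mellin inversion,
\[
\tilde{H}_m(s) - \tilde{H}_{m,N}(s) = \frac{1}{2\pi i} \int_{(c)} \tilde{H}_m(s+z) \hat{v}_N(z)\, dz,
\]
and shift the contour past $\Re z = \sigma_1 - \Re s$; for $\tau \in \mathcal{I}_\mathcal{K}(T)$ no zero of $\mathcal{L}$ is crossed, and the shifted line integral is controlled by a mean-square bound for $\tilde{H}_m$ on $\Re s = \sigma_1$ supplied by (S5) and the extra smoothing $(\log n)^{-(m+1)}$.

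Combining the three stages with the fact that $\tilde{H}_{m,N}(\cdot,\omega) \to \tilde{H}_m(\cdot,\omega)$ in $\mathcal{H}(\mathcal{R})$ $\mathbf{m}$-almost surely (whence $\mathcal{Q}_N \Rightarrow \mathcal{Q}$), a standard three-epsilon argument (Theorem~4.2 of Billingsley, \emph{Convergence of Probability Measures}) yields $\mathcal{Q}_T \Rightarrow \mathcal{Q}$. The main obstacle is the contour-shift estimate in Stage 3: one needs a usable mean-square bound for $\tilde{H}_m$ on a line just to the right of $\sigma_\mathcal{L}$, and this is exactly where the zero density hypothesis (S5) does the work that Theorem~\ref{NS uni} previously required of the unconditional second moment of $\mathcal{L}$.
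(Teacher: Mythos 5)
Your overall architecture (torus equidistribution, smooth truncation plus continuous mapping, then a two-sided mean approximation closed by Billingsley's three-epsilon theorem) is exactly the paper's, and your random-side second-moment bound via orthogonality of $\{\omega(n)\}$ is a legitimate alternative to the paper's contour-shift argument there. The genuine gap is in your deterministic-side estimate in Stage~3. You propose to write $\tilde{H}_m - \tilde{H}_{m,N}$ by Mellin inversion and shift the contour to $\Re z = \sigma_1 - \Re s$, asserting that for $\tau \in \mathcal{I}_\mathcal{K}(T)$ no zero is crossed. This is false: the vertical line of integration has unbounded imaginary part, so the shift sweeps the entire strip $\sigma_1 \le \Re w \le \Re s + c$ at \emph{all} heights, and membership in $\mathcal{I}_\mathcal{K}(T)$ only guarantees that the window of height $2(|\mathcal{K}|+1)$ around $\tau + \tau_0$ avoids the horizontal cuts emanating from zeros with $\beta_\mathcal{L} > \sigma_0$. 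Zeros of $\mathcal{L}$ with real part exceeding $\sigma_1$ at heights far from $\tau$ are not excluded, and $\tilde{H}_m$ has genuine singularities (branch cuts) there. Even if you truncate the $z$-integral using the rapid decay of $\hat{v}_N$, the height of the zero-free window you need grows with the truncation length, whereas $\mathcal{I}_\mathcal{K}(T)$ gives a window of fixed height; this is precisely why the paper introduces the smaller set $\mathcal{X}_\mathcal{K}(T) = \mathcal{G}_{\sigma_0, |\mathcal{K}|+Y(T)+4} \cap [T,2T]$ with window width $Y(T) = (\log T)^{4/(\sigma_1-\sigma_0)}$, still of measure $\sim T$ by (S5).

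The second, related gap is your claim that (S5) ``supplies'' a mean-square bound for $\tilde{H}_m$ on $\Re s = \sigma_1$. It does not do so directly; producing a usable estimate is the real work. The paper's route is to prove (Lemmas~\ref{lemma poly1} and \ref{lemma poly2}) that away from zeros one has the explicit Dirichlet polynomial approximation
\[
\tilde{H}_m(\sigma+it) = \sum_{2 \le n \le Y(T)} \frac{\Lambda_\mathcal{L}(n)}{n^{\sigma+it}(\log n)^{m+1}} + O_{\mathcal{L},m}\bigl(Y(T)^{\sigma_0-\sigma_1}(\log T)^3\bigr),
\]
derived from the Landau-type formula $\log\mathcal{L}(s) = \sum_{|t-\gamma_\mathcal{L}|\le 1}\log(s-\rho_\mathcal{L}) + O_\mathcal{L}(\log|t|)$, and then to compare this polynomial with $\tilde{H}_{m,X}$ using the Montgomery--Vaughan mean value theorem; the exceptional set has measure $o(T)$ by (S5). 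You would need to supply an ingredient of this kind in place of the contour shift; without it, Stage~3 on the $t$-side does not close. (A minor further point: your cutoff $v_N(n)=\exp(-(n/N)^\theta)$ is not compactly supported, so $\tilde{H}_{m,N}(\cdot,\omega)$ depends on all coordinates of $\Omega$ and Stage~2 needs an extra tail-uniformity argument before the continuous mapping theorem applies; the paper sidesteps this by taking $\varphi$ of compact support, making $\tilde{H}_{m,X}$ a finite sum.)
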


\begin{proposition} \label{proposition2}
The support of the probability measure $\mathcal{Q}$ coincides with $\mathcal{H}(\mathcal{R})$. 
\end{proposition}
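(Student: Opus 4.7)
The plan is to follow the Bagchi paradigm adapted to the iterated-integral setting. Writing
\[
\tilde{H}_m(s, \omega) = \sum_p X_p(s, \omega), \qquad X_p(s, \omega) := \sum_{k=1}^\infty \frac{b(p^k)\, \omega(p)^k}{(k\log p)^m p^{ks}},
\]
realises $\mathcal{Q}$ as the distribution in $\mathcal{H}(\mathcal{R})$ of an almost-surely convergent sum of independent random elements indexed by primes. The proof would proceed in three steps.

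Step 1 would establish almost-sure convergence of $\sum_p X_p$ in $\mathcal{H}(\mathcal{R})$. Using independence of distinct $\omega(p)$ and the orthogonality $\mathbb{E}[\omega(p)^j \overline{\omega(p)^k}] = \delta_{jk}$, one finds $\sum_p \mathbb{E}[\sup_{s \in K_j}|X_p(s)|^2] \ll \sum_p |a(p)|^2/((\log p)^{2m} p^{2\sigma_1})$ plus smaller tail terms on each compact $K_j \subset \mathcal{R}$, and this is finite by (S1) since $\sigma_1 > \sigma_\mathcal{L} \geq 1/2$. Kolmogorov's theorem for independent random elements in the Polish space $\mathcal{H}(\mathcal{R})$ then gives the claim. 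Step 2 invokes the standard support formula for such sums: since each $\mathrm{supp}(X_p) = \{X_p(\cdot, a) : a \in \gamma\}$ by the uniform distribution of $\omega(p)$ on $\gamma$, the support of $\mathcal{Q}$ equals the closure in $\mathcal{H}(\mathcal{R})$ of
\[
S := \Bigl\{ f \in \mathcal{H}(\mathcal{R}) : f(s) = \sum_p X_p(s, a_p)\text{ for some } (a_p) \in \Omega \text{ making the sum convergent} \Bigr\}.
\]

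Step 3, the crux, is to show $S$ is dense in $\mathcal{H}(\mathcal{R})$. Given $g \in \mathcal{H}(\mathcal{R})$, a compact $K \subset \mathcal{R}$ with connected complement, and $\varepsilon > 0$, Mergelyan's theorem yields a polynomial $P$ with $\sup_K |g - P| < \varepsilon/3$. After a standard truncation-approximation handling the $k \geq 2$ contributions (which are smaller than the main term thanks to $\theta < 1/2$ in (S4)), the task reduces to approximating $P$ on $K$ by finite sums $\sum_{p \leq N} a_p Y_p(\cdot, 1)$, where $Y_p(s, a) := a(p)a/((\log p)^m p^s)$ and $|a_p| = 1$. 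Embedding $\mathcal{H}(K)$ into an ambient Hilbert space $H$ (e.g.\ a Hardy-type space on a neighbourhood of $K$), this becomes a classical Bagchi--Voronin density problem and, via Hahn--Banach duality together with a complex Pechersky-type rearrangement theorem, reduces to verifying $\sum_p \lVert Y_p \rVert_H^2 < \infty$ (from Step 1) and $\sum_p |\Phi(Y_p)| = \infty$ for every non-zero continuous linear functional $\Phi$ on $H$. The latter is obtained from the linear independence of $\{\log p : p \text{ prime}\}$ over $\mathbb{Q}$, a Fourier-analytic representation of $\Phi$, and (S6) combined with partial summation.

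The chief obstacle is the verification of $\sum_p |\Phi(Y_p)| = \infty$ in the presence of the extra polylogarithmic weight $(\log p)^{-m}$ peculiar to the iterated integrals. This weight slows the divergence of partial sums, but since $K$ sits strictly to the left of $\sigma = 1$, the relevant prime sums $\sum_p |a(p)|/((\log p)^m p^\sigma)$ still diverge by (S6), preserving the Pechersky/conditional-convergence regime. Managing this weight carefully, and ensuring the Bagchi--Voronin machinery goes through for every non-trivial functional in the Hilbert space associated to $\mathcal{R}$, is the delicate technical heart of the argument; the remainder is a routine adaptation of existing technology to the Selberg class under (S1)--(S6).
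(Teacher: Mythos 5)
Your proposal follows essentially the same route as the paper: almost-sure convergence of the independent sum $\sum_p g_p(s,\omega(p))$, the support theorem for sums of independent $\mathcal{H}(\mathcal{R})$-valued random elements (the paper cites Laurin\v{c}ikas), the identification $\supp(g_p(\cdot,\omega(p)))\supset\{g_p(\cdot,z):z\in\gamma\}$, and then the denseness of the set of convergent rearrangements, which the paper delegates to the Nagoshi--Steuding positive density method built on (S6) --- exactly the Mergelyan/Hilbert-space/Pechersky machinery you outline in Step 3. You also correctly locate the only genuinely delicate point, namely carrying the $(\log p)^{-m}$ weight through the divergence condition for nonzero functionals, which is precisely what the adaptation of \cite[Proposition~1]{NS} must absorb.
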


The proof of this propositions is in the principle of Bagchi \cite{Ba}. 
Actually, the proof of Proposition~\ref{proposition1} is based on Endo \cite{En} and Kowalski \cite{Ko}, 
and the proof of Proposition~\ref{proposition2} is based on Endo \cite{En} and Nagoshi and Steuding \cite{NS}.

\section{A LIMIT PROBABILITY MEASURE AND ITS SUPPORT}

\subsection{Proof of Proposition~\ref{proposition1}}

\begin{lemma} \label{lemma smooth}
Let $\varphi : [0, \infty) \to \mathbb{C}$ be smooth and assume that $\varphi$ and all its derivatives decay faster than any polynomial at infinity, and let $\hat{\varphi}(s) = \int_{0}^{\infty} \varphi(x) x^{s-1}\, dx$ be the Mellin transform of $\varphi$ on $\Re(s) > 0$.
\begin{enumerate}
\item[(1)] The Mellin transform $\hat{\varphi}$ extends to a meromorphic function on $\Re(s) > -1$, with at most a simple pole at $s = 0$ with residue $\varphi(0)$.
\item[(2)] For any real numbers $-1 < A < B$, the Mellin transform has rapid decay in the strip $A \le \sigma \le B$, in the sense that for any integer $k \ge 1$, there exists a constant $C = C(k, A, B) \ge 0$ such that 
\[
|\hat{\varphi}(\sigma + it) | \le C (1 + |t|)^{-k}.
\]
for all $A \le \sigma \le B$ and $|t| \ge 1$.
\item[(3)] For any $\sigma > 0$ and any $x \ge 0$, we have the Mellin inversion formula 
\[
\varphi(x) = \frac{1}{2 \pi i}\int_{\sigma - i \infty}^{\sigma + i \infty} \hat{\varphi}(s)x^{-s}\, ds.
\]

\end{enumerate}
\end{lemma}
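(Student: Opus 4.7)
The plan is to handle the three parts using, respectively, subtraction of the value at $0$, iterated integration by parts, and reduction to classical Fourier inversion.

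For part (1), I would split the defining integral at $x = 1$. The tail $\int_1^\infty \varphi(x) x^{s-1}\,dx$ is entire by the rapid decay of $\varphi$. For the head, I would isolate the singular behavior at $0$ by writing $\varphi(x) = \varphi(0) + x\psi(x)$ where $\psi(x) := \int_0^1 \varphi'(tx)\,dt$ is smooth on $[0,1]$, which yields
\[
\int_0^1 \varphi(x) x^{s-1}\,dx = \frac{\varphi(0)}{s} + \int_0^1 \psi(x) x^s\,dx
\]
for $\Re(s) > 0$; the remainder integral is holomorphic on $\Re(s) > -1$, giving the meromorphic continuation together with the claimed simple pole and residue.

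For part (2), I would iterate integration by parts. For $\Re(s) > 0$ one verifies by induction that
\[
\hat\varphi(s) = \frac{(-1)^k}{s(s+1)\cdots(s+k-1)} \int_0^\infty \varphi^{(k)}(x) x^{s+k-1}\,dx,
\]
with boundary terms at both ends vanishing at every step. Since the right-hand side is meromorphic on $\Re(s) > -k$, the identity extends to the whole strip $A \le \Re(s) \le B$ by analytic continuation. Splitting the remaining integral at $x = 1$ and using that $A + k - 1 > -1$ to control the integrand on $(0,1]$ together with the rapid decay of $\varphi^{(k)}$ on $[1, \infty)$, the integral is uniformly bounded in the strip by a constant depending only on $k, A, B$. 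Because $|s+j| \ge |t|$ for every $j$ when $|t| \ge 1$, the prefactor is bounded by $|t|^{-k}$, which yields the claimed bound.

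For part (3), I would reduce to Fourier inversion via the substitution $x = e^u$, so that
\[
\hat\varphi(\sigma + it) = \int_{-\infty}^{\infty} \bigl(\varphi(e^u)\, e^{\sigma u}\bigr)\, e^{itu}\, du,
\]
the Fourier transform of $f(u) := \varphi(e^u)\, e^{\sigma u}$. For $\sigma > 0$, the rapid decay of $\varphi$ at $+\infty$ combined with the factor $e^{\sigma u}$ at $-\infty$ places $f$ in $L^1(\mathbb{R})$, while part (2) ensures that $t \mapsto \hat\varphi(\sigma + it)$ lies in $L^1(\mathbb{R})$ as well. Classical Fourier inversion then yields the Mellin inversion formula for $x > 0$; the case $x = 0$ follows by shifting the contour to a line $\Re(s) = \sigma' \in (-1, 0)$, picking up $\varphi(0)$ as the residue at $s = 0$ (by part (1)) and letting $x \to 0^+$, using the rapid decay from part (2) on the shifted line. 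The main subtlety throughout is in part (2), where direct integration by parts at the very first step requires $\Re(s) > 0$ for the boundary term at $x = 0$ to vanish, so the extension to $\Re(s) > -1$ must proceed by analytic continuation of the right-hand side rather than by direct calculation.
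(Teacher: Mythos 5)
Your proof is correct. Note that the paper does not actually prove this lemma --- it simply cites Kowalski \cite{Ko}, Proposition~A.3.1 --- and your argument (subtracting $\varphi(0)$ via the Taylor remainder to expose the simple pole, iterated integration by parts first on $\Re(s)>0$ and then extending by analytic continuation to get the vertical decay, and the substitution $x=e^u$ reducing part (3) to classical Fourier inversion) is the standard route taken in that reference, so it supplies a correct self-contained proof of what the paper treats as a black box. The only point worth flagging is cosmetic: at $x=0$ the inversion formula is degenerate as literally written (since $0^{-s}$ is undefined for $\Re(s)>0$), and your reading of it as the limit $x\to 0^{+}$ after shifting the contour past the pole is the right interpretation.
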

\begin{proof}
See \cite[Proposition~A.3.1]{Ko}.
\end{proof}

Now we fix a real-valued smooth function $\varphi(x)$ on $[0, \infty)$ with compact support satisfying $\varphi(x) = 1$ and $0 \le \varphi(x) \le 1$. 
We note that we can take $\varphi$ which satisfies the assumptions of Lemma~\ref{lemma smooth}.

\begin{lemma} \label{lemma poly1}
Let $y \ge 2$, $|t| \ge y+3$, and $1/2 \le \sigma_\ast \le 1$ be satisfied $\mathcal{L}(z) \ne 0$ on the set \\
$\{z : \sigma_\ast < \Re(z) \le 1, |\Im(z) -t| \le y+2 \}$. 
Then, for $\sigma_\ast < \sigma \le 1$, 
\[
\tilde{H}_m(\sigma + it) = \sum_{2 \le n \le y} \frac{\Lambda_\mathcal{L}(n)}{n^{\sigma+it}(\log{n})^{m+1}} 
+ O_{\mathcal{L}, m}\left(\frac{\log|t|}{(\sigma' -\sigma_\ast)^2}y^{\sigma' -\sigma}\right), 
\]
where $\sigma' = \min\{\sigma_\ast + \frac{1}{\log{y}}, \frac{\sigma + \sigma_\ast}{2} \}$. 
\end{lemma}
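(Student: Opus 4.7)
The approach is a truncated Perron-type formula for $\tilde H_m(s)$ itself, combined with a contour shift into the critical strip, leveraging the local non-vanishing hypothesis to keep $\tilde H_m$ holomorphic across the shift. Since $\tilde H_m(s) = \sum_{n \ge 2} \Lambda_{\mathcal L}(n)/((\log n)^{m+1} n^s)$ converges absolutely for $\Re(s) > 1$, one applies Perron directly to the coefficient sequence $\Lambda_{\mathcal L}(n)/(\log n)^{m+1}$. Fix $c > \max(0, 1-\sigma)$ and a truncation height $T$ of size at least $y$, and write
\[
\sum_{2 \le n \le y} \frac{\Lambda_{\mathcal L}(n)}{(\log n)^{m+1} n^{s}} = \frac{1}{2\pi i} \int_{c-iT}^{c+iT} \tilde H_m(s + w)\frac{y^w}{w}\, dw + \mathcal E_0,
\]
where $\mathcal E_0$ gathers the usual Perron truncation errors; taking $T \asymp y$ makes these negligible, using $|\tilde H_m(s+c)| \ll 1$ from absolute convergence on $\Re(s+w) = 1 + c$.

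Next, shift the contour from $\Re(w) = c$ to $\Re(w) = \sigma' - \sigma$. By the hypothesis $\mathcal L$ is zero-free on the rectangle $\{z : \sigma_\ast < \Re z \le 1,\ |\Im z - t| \le y + 2\}$, so $\log \mathcal L(z)$ and all its iterated integrals $\tilde H_m$ extend holomorphically throughout the region $[\sigma' - \sigma, c] + i[-T, T]$ that the shift sweeps out. The only singularity crossed is the simple pole of $y^w/w$ at $w = 0$, whose residue is $\tilde H_m(s)$. After rearranging,
\[
\tilde H_m(s) = \sum_{2 \le n \le y} \frac{\Lambda_{\mathcal L}(n)}{(\log n)^{m+1} n^{s}} - \frac{1}{2\pi i} \int_{(\sigma'-\sigma) - iT}^{(\sigma'-\sigma) + iT} \tilde H_m(s + w) \frac{y^w}{w}\, dw + \mathcal E_1,
\]
with $\mathcal E_1$ also including the two short horizontal pieces at height $\pm T$, again absorbed into the target error for $T$ large enough.

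Finally, estimate the shifted vertical integral. On $\Re(w) = \sigma' - \sigma$ one has $|y^w| = y^{\sigma' - \sigma}$ and $|w| \ge \sigma - \sigma'$. The key input is a pointwise bound
\[
|\tilde H_m(\sigma' + i\tau)| \ll \frac{\log(|\tau| + 2)}{\sigma' - \sigma_\ast} \qquad (|\tau - t| \le y + 2),
\]
which for $m = 0$ is the classical Borel--Carath\'eodory estimate on $\log \mathcal L$ in a zero-free rectangle (applied on a disc of radius $\asymp 1 - \sigma_\ast$ centered near $1 + 1/\log y + it$, starting from the uniform bound on $\Re \log \mathcal L$ available from absolute convergence), and for $m \ge 1$ follows inductively by integrating this bound in $\sigma$ from $\sigma'$ to $\infty$ and splitting the range at the transition between the two cases in the definition of $\sigma'$. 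Combining the bound with $1/|w|$ and integrating over $|\Im w| \le T \asymp y$ produces the factor $\log|t|/(\sigma' - \sigma_\ast)^2$ matched against $y^{\sigma' - \sigma}$, as required. The chief obstacle is establishing and propagating this pointwise bound with the correct $(\sigma' - \sigma_\ast)^{-1}$ dependence: the depth $|\Im z - t| \le y + 2$ of the zero-free rectangle must be used fully, so that the Borel--Carath\'eodory disc of radius comparable to $1 - \sigma_\ast$ fits without touching zeros and the $\log|t|$ factor (not something worse) emerges.
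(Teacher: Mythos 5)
Your overall strategy (Perron integral for $\tilde H_m$ itself, contour shift into the zero-free rectangle, residue at $w=0$, pointwise bound on the shifted line) is the same route the paper takes: the paper simply cites Granville--Soundararajan's Lemma~2.1 and runs that argument verbatim, with the input bound on $\log\mathcal{L}$ supplied by Pa\'nkowski--Steuding. Your bookkeeping on the shifted vertical integral is also sound: the factor $\log\bigl(T/(\sigma-\sigma')\bigr)$ coming from the $1/w$ kernel is absorbed because $\sigma'-\sigma_\ast\le 1/\log y$ and $\sigma-\sigma'\ge\sigma'-\sigma_\ast$, which is exactly what the two-case definition of $\sigma'$ is for.

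The one genuine gap is in your justification of the key pointwise bound $|\log\mathcal{L}(\sigma'+i\tau)|\ll\log|t|/(\sigma'-\sigma_\ast)$. You propose Borel--Carath\'eodory on a disc of radius $\asymp 1-\sigma_\ast$ about $1+1/\log y+i\tau$, ``starting from the uniform bound on $\Re\log\mathcal{L}$ available from absolute convergence.'' Absolute convergence of $\sum a(n)n^{-s}$ controls $\log|\mathcal{L}|$ only on $\Re(z)>1$, whereas most of that disc lies inside the critical strip; what Borel--Carath\'eodory actually needs is an upper bound for $\Re\log\mathcal{L}(z)=\log|\mathcal{L}(z)|$ on the \emph{larger} circle, i.e.\ the polynomial growth $|\mathcal{L}(\sigma+iv)|\ll_{\mathcal{L}}|v|^{A}$ in the strip, which for a general element of the Selberg class comes from (S2), (S3) and Phragm\'en--Lindel\"of --- not from absolute convergence. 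This is precisely the nontrivial input the paper imports via \cite[Lemma~2.1]{PS}, whose expansion $\log\mathcal{L}(z)=\sum_{|\Im z-\gamma_\mathcal{L}|\le1}\log(z-\rho_\mathcal{L})+O_{\mathcal{L}}(\log|\Im z|)$ already packages the order estimate together with the zero counting $N_{\mathcal{L}}(v)\ll\log v$, and in the zero-free rectangle immediately yields the required bound (each of the $O(\log|t|)$ terms satisfies $|z-\rho_\mathcal{L}|\ge\sigma'-\sigma_\ast$). Once you replace your appeal to absolute convergence by this order estimate (or simply quote the Pa\'nkowski--Steuding expansion), the rest of your argument, including the inductive integration in $\sigma$ to pass from $m=0$ to general $m$, goes through.
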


\begin{proof}
Using 
\[
\log{\mathcal{L}}(s) = \sum_{|t -\gamma_\mathcal{L}| \le 1} \log{(s-\rho_\mathcal{L})} + O_{\mathcal{L}}(\log{|t|})
\]
for $-\frac{5}{2} \le \sigma \le \frac{7}{2}$, $s \in G_\mathcal{L}$ (see \cite[Lemma2.1]{PS}), we can prove in the same way as \cite[Lemma~2.1]{GS}.

\end{proof}

\begin{lemma} \label{lemma poly2}
Let $1/2< \sigma_3 < \sigma_4 \le 1$ and let $T$ and $y$ satisfying $T \ge y+3$ be sufficiently larger than $\sigma_3, \sigma_4$. 
We put 
\begin{align*}
l(T; \sigma_3, y) = &\left(\bigcup_{\substack{\rho_\mathcal{L} = \beta_\mathcal{L} +i\gamma_\mathcal{L} \\ \beta_\mathcal{L} > \sigma_3 \\ \gamma_\mathcal{L} \in [T/2, {5T}/2]}} (\gamma_\mathcal{L} -(y +3), \gamma_\mathcal{L} +(y +3)) \right) \\
&\cup[T/2, T/2 + (y+3)] \cup[{5T}/2 -(y+3), {5T}/2].
\end{align*}

Then we have 
\[
\tilde{H}_m(\sigma +it) = \sum_{2 \le n \le y} \frac{\Lambda_\mathcal{L}(n)}{n^{\sigma + it} (\log{n})^{m+1}} 
+ O_{\mathcal{L}, m}(y^{\sigma_3 - \sigma_4} (\log{T})^3)
\]
for $\sigma_4 \le \sigma \le 1, t \in [T/2, 5T/2] \setminus l(T; \sigma_3, y)$ and 

\[
\meas(l(T; \sigma_3, y)) =o_\mathcal{L}\left(\frac{yT}{(\log T)^\alpha}\right)
\]
for $\sigma_3 > \sigma_\mathcal{L}$ for any $\alpha > 0$
\end{lemma}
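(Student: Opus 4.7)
The plan is to apply Lemma~\ref{lemma poly1} pointwise at every $t \in [T/2, 5T/2] \setminus l(T;\sigma_3,y)$ with $\sigma_\ast = \sigma_3$, and then separately estimate $\meas(l(T;\sigma_3,y))$ using (S5).

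First I would verify that the zero-free rectangle hypothesis of Lemma~\ref{lemma poly1} holds at such $t$. Because $t$ avoids the two endpoint intervals of length $y+3$ in the definition of $l(T;\sigma_3,y)$, one has $t \in (T/2 + (y+3),\,5T/2 - (y+3))$, so the interval $[t-(y+2),\,t+(y+2)]$ is contained in $[T/2,5T/2]$. Consequently, any zero $\rho_\mathcal{L} = \beta_\mathcal{L} + i\gamma_\mathcal{L}$ with $\beta_\mathcal{L} > \sigma_3$ and $|\gamma_\mathcal{L} - t| \le y+2$ must satisfy $\gamma_\mathcal{L} \in [T/2, 5T/2]$. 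But for each such zero the interval $(\gamma_\mathcal{L}-(y+3),\gamma_\mathcal{L}+(y+3))$ is explicitly excluded, forcing $|t-\gamma_\mathcal{L}| \ge y+3$, a contradiction. Hence $\mathcal{L}(z) \ne 0$ on $\{z : \sigma_3 < \Re(z) \le 1,\ |\Im(z) - t| \le y+2\}$, and Lemma~\ref{lemma poly1} applies.

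Next I would tune the parameter $\sigma'$ arising in Lemma~\ref{lemma poly1}. For $y$ large enough that $1/\log y < (\sigma_4-\sigma_3)/2$, the minimum defining $\sigma'$ is $\sigma_3 + 1/\log y$. Therefore $(\sigma'-\sigma_3)^{-2} = (\log y)^2$ and $y^{\sigma'-\sigma} = e\,y^{\sigma_3-\sigma} \le e\,y^{\sigma_3 - \sigma_4}$ for $\sigma_4 \le \sigma \le 1$. Combining these with $\log|t| \ll \log T$ and $\log y \le \log T$, the error term in Lemma~\ref{lemma poly1} is $O_{\mathcal{L},m}\!\left(y^{\sigma_3-\sigma_4}(\log T)^3\right)$, as desired.

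Finally, for the measure estimate, the set $l(T;\sigma_3,y)$ consists of two intervals of length $y+3$ at the endpoints together with at most $N_\mathcal{L}(\sigma_3,3T)$ intervals of length $2(y+3)$, one around each zero of $\mathcal{L}$ with $\beta_\mathcal{L} > \sigma_3$ and $\gamma_\mathcal{L} \in [T/2,5T/2]$. Since $\sigma_3 > \sigma_\mathcal{L}$, the zero density estimate (S5) gives $N_\mathcal{L}(\sigma_3,3T) = o_\mathcal{L}(T/(\log T)^\alpha)$ for every $\alpha > 0$, and hence $\meas(l(T;\sigma_3,y)) \ll y\,N_\mathcal{L}(\sigma_3,3T) + y = o_\mathcal{L}(yT/(\log T)^\alpha)$. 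The main (mild) obstacle is really just the geometric bookkeeping in the first step: one must recognize that the two endpoint intervals in the definition of $l(T;\sigma_3,y)$ serve precisely to push $t$ enough ($y+3 > y+2$) away from the boundary of $[T/2,5T/2]$ so that the zeros potentially threatening Lemma~\ref{lemma poly1} are exactly those already handled by the zero-removal intervals.
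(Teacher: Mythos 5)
Your proposal is correct and follows essentially the same route as the paper: apply Lemma~\ref{lemma poly1} with $\sigma_\ast = \sigma_3$, note that $\sigma' = \sigma_3 + 1/\log y$ so the error becomes $O_{\mathcal{L},m}(y^{\sigma_3-\sigma_4}(\log T)^3)$, and deduce the measure bound from the zero density estimate (S5). You are in fact more careful than the paper, which leaves implicit the verification that $t \notin l(T;\sigma_3,y)$ guarantees the zero-free rectangle hypothesis of Lemma~\ref{lemma poly1}; your bookkeeping of the $y+3$ versus $y+2$ margins is exactly the right justification.
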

 
\begin{proof}
We apply Lemma~\ref{lemma poly1} with $\sigma_\ast = \sigma_3$. 
Since $y$ is sufficiently larger than $\sigma_3$, we have  
$\sigma' = \min \left\{\sigma_3 + 1/\log{y}, (\sigma + \sigma_3)/2 \right\} = \sigma_3 + 1/\log{y}$ 
and 
\[
\sigma' -\sigma \le \sigma' -\sigma_4 = \sigma_3 -\sigma_4 + \frac{1}{\log{y}}.
\]
By $\sigma' -\sigma_3 = 1/\log{y}$, it holds that
\[
\frac{1}{\sigma' -\sigma_3} = \log{y} \ll \log{T}.
\]
Therefore we have 
\[
\frac{\log|t|}{(\sigma' -\sigma_3)^2} y^{\sigma'-\sigma} 
\ll y^{\sigma_3 -\sigma_4} (\log{T})^3.
\]
The last estimate follows from the zero density estimate (\ref{zero density}). 
\end{proof}

We will apply Lemma~\ref{lemma poly2} with  
$\sigma_3 = \sigma_0$, $\sigma_4 = \sigma_1$ and we put  
\begin{align*}
Y(T) = (\log{T})^\frac{4}{\sigma_1 -\sigma_0},\  
\mathcal{X}_\mathcal{K}(T) = \mathcal{G}_{\sigma_0, |\mathcal{K}| + Y(T) +4} \cap [T, 2T].
\end{align*}

The zero density estimate (\ref{zero density}) yields  
$\mathcal{X}_\mathcal{K}(T) \subset \mathcal{I}_\mathcal{K}(T)$ and $\meas({\mathcal{X}_\mathcal{K}(T)}) \sim T$ as $T \to \infty$. 
We put 
\[
\tilde{H}_{m, X}(s + i\tau) = \sum_{n=2}^{\infty}\frac{\Lambda_\mathcal{L}(n) \varphi(n/X)}{n^{s+ i\tau}(\log{n})^{m+1}}
= \sum_{2 \le n \le X}\frac{\Lambda_\mathcal{L}(n)}{n^{s+ i\tau}(\log{n})^{m+1}} + R_{m, X}(s +i\tau), 
\]

\[
\tilde{H}_{m, X}(s, \omega) = \sum_{n=2}^{\infty}\frac{\Lambda_\mathcal{L}(n) \omega(n) \varphi(n/X)}{n^s(\log{n})^{m+1}}
\]
for $X \ge 2$.

\begin{lemma} \label{lemma int conv}
For any compact subset $C$ of $\mathcal{R}$, we have 
\[
\lim_{X \to \infty} \limsup_{T \to \infty} \frac{1}{\meas(\mathcal{I}_\mathcal{K}(T))} \int_{\mathcal{X}_\mathcal{K}(T)} \sup_{s \in C}|\tilde{H}_m(s + i\tau) -\tilde{H}_{m, X}(s + i\tau)|\, d\tau = 0.
\]
\end{lemma}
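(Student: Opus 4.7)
The plan is to express the difference $\tilde H_m-\tilde H_{m,X}$ as a contour integral via the Mellin transform of $\varphi$ (Lemma~\ref{lemma smooth}), shift the line of integration to the left of $\mathcal K$, and control what remains by an $L^2$-mean-value estimate for $\tilde H_m$ on a line just to the right of $\sigma_\mathcal L$.

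For $\Re(s)>1$ and $c+\Re(s)>1$, Mellin inversion applied termwise gives
$$\tilde H_{m,X}(s+i\tau)=\frac{1}{2\pi i}\int_{(c)}\hat\varphi(z)X^z\,\tilde H_m(s+z+i\tau)\,dz.$$
Pick $\delta\in(0,\sigma_1-\sigma_0)$ and shift the contour to $\Re(z)=-\delta$. The only singularity crossed in this strip is the simple pole of $\hat\varphi$ at $z=0$, with residue $\varphi(0)\tilde H_m(s+i\tau)=\tilde H_m(s+i\tau)$; if $\mathcal L$ has a pole at $s=1$, the corresponding singularity of the integrand at $z=1-s$ is avoided by a small indentation, contributing only an $O(X^{1-\sigma_1})$ error that vanishes as $X\to\infty$. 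By analytic continuation,
$$\tilde H_m(s+i\tau)-\tilde H_{m,X}(s+i\tau)=-\frac{1}{2\pi i}\int_{(-\delta)}\hat\varphi(z)X^z\,\tilde H_m(s+z+i\tau)\,dz+O(X^{1-\sigma_1}).$$

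Applying Cauchy's integral formula on a cycle surrounding $C$ inside $\mathcal R$ allows the supremum over $s\in C$ to be dominated by a line integral, so the task reduces to bounding
$$X^{-\delta}\int_{\mathcal X_\mathcal K(T)}\int_{-\infty}^{\infty}|\hat\varphi(-\delta+it)|\,|\tilde H_m(w_0-\delta+it+i\tau)|\,dt\,d\tau$$
for finitely many base points $w_0$ on that cycle. The rapid decay of $\hat\varphi$ (Lemma~\ref{lemma smooth}(2)) restricts the effective range of $t$ to $|t|\le Y(T)$ up to a negligible error; for such $t$ and $\tau\in\mathcal X_\mathcal K(T)$, the definition of $\mathcal X_\mathcal K(T)$ guarantees that $\tau+t$ stays outside the excluded intervals around ordinates of zeros with $\beta_\mathcal L>\sigma_0$, so Lemma~\ref{lemma poly2} applied with $\sigma_3=\sigma_0$, $\sigma_4=\sigma_0+\delta/2$ approximates $\tilde H_m$ by a Dirichlet polynomial of length $Y(T)$ with an $O((\log T)^{-1})$ error. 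Expanding the square of this polynomial and invoking the prime mean-square (S6) yields the $L^2$-bound
$$\int_{\mathcal X_\mathcal K(T)}|\tilde H_m(w_0-\delta+it+i\tau)|^2\,d\tau\ll T$$
uniformly in $|t|\le Y(T)$. Cauchy--Schwarz and Fubini then bound the full double integral by $O(T)$, so the average over $\tau$ is $O(X^{-\delta})+o_T(1)$; letting $T\to\infty$ and then $X\to\infty$ concludes the proof.

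The main technical obstacle is the uniform $L^2$-bound for $\tilde H_m$ on the good set $\mathcal X_\mathcal K(T)$: Lemma~\ref{lemma poly2} supplies the pointwise approximation, but turning it into an $L^2$-estimate uniform in $t\in[-Y(T),Y(T)]$ requires a careful expansion of the truncated Dirichlet polynomial, controlled via (S6) for the diagonal and an elementary mean-value computation for the off-diagonal contribution. Once this is secured, the Mellin shift and the $\sup$-to-line-integral passage via Cauchy's formula are routine.
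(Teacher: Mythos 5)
Your Mellin--shift strategy is the one the paper uses for the \emph{random} model in Lemma~\ref{lemma prob2}, where it is legitimate because $\tilde{H}_m(s,\omega)$ is almost surely holomorphic and polynomially bounded on the whole half-plane $\Re(s)\ge(\sigma_0+\sigma_1)/2$ (Lemma~\ref{lemma prob1}). For $\tilde{H}_m(s+i\tau)$ itself the shift is not justified, and this is the genuine gap. As a function of $z$, $\tilde{H}_m(s+z+i\tau)$ has branch points, with cuts extending to the left, at every $z=\rho_\mathcal{L}-s-i\tau$ with $\beta_\mathcal{L}\ge\Re(s)-\delta$. Membership of $\tau$ in $\mathcal{X}_\mathcal{K}(T)=\mathcal{G}_{\sigma_0,|\mathcal{K}|+Y(T)+4}\cap[T,2T]$ only keeps such singularities away from the window $|\Im(z)|\le Y(T)+3$ or so; beyond that height the strip $-\delta\le\Re(z)\le c$ may well contain points $\rho_\mathcal{L}-s-i\tau$ with $\beta_\mathcal{L}>\sigma_0$, and the line $\Re(z)=-\delta$ itself crosses the associated cuts, so the identity you write for $\tilde{H}_m-\tilde{H}_{m,X}$ does not hold as stated. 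Invoking the rapid decay of $\hat{\varphi}$ to ``restrict the effective range to $|t|\le Y(T)$'' happens only \emph{after} the shift and cannot repair it; one would have to truncate the contour at height $Y(T)$ before shifting and then control both the horizontal segments and the branch-cut contributions, none of which you address. A smaller error: the term you attribute to the pole at $z=1-s-i\tau$ is claimed to be $O(X^{1-\sigma_1})$ and to ``vanish as $X\to\infty$'', but $1-\sigma_1>0$, so it does not; that contribution is negligible only because $|\hat{\varphi}|$ decays rapidly at height about $\tau\ge T$.

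The paper's proof avoids all of this. After the same Cauchy-formula reduction of the supremum over $C$ to an integral over $\partial\mathcal{R}$, it inserts the length-$Y(T)$ Dirichlet polynomial from Lemma~\ref{lemma poly2} (valid pointwise for $t$ outside $l(T;\sigma_0,Y(T))$, with error $O(Y(T)^{\sigma_0-\sigma_1}(\log T)^3)=O(1/\log T)$), and then compares that polynomial with $\tilde{H}_{m,X}$ --- itself a finite Dirichlet polynomial, since $\varphi$ has compact support --- in mean square over the full interval $[T/2,5T/2]$ via the Montgomery--Vaughan mean value theorem. No contour is moved in the $\tau$-aspect, so the zeros never interfere. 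The resulting bound is the tail $\sum_{n>X}|\Lambda_\mathcal{L}(n)|^2 n^{-2\sigma}(\log n)^{-2m-2}$, which tends to $0$ as $X\to\infty$ because $b(p^k)\ll p^{k\theta}$ with $\theta<1/2$ and $\sigma>\sigma_\mathcal{L}\ge 1/2$; note that (S6) plays no role in this lemma, contrary to what your sketch suggests. To salvage your route you would need either holomorphy and polynomial growth of $\tilde{H}_m(\cdot+i\tau)$ on a full half-plane (false for individual $\tau$) or the truncated-contour analysis sketched above; the paper's direct polynomial comparison is the cleaner path.
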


\begin{proof}
Let $T$ be sufficiently large.
When $\tau \in \mathcal{X}_\mathcal{K}(T)$,  $\tilde{H}_m(s + i\tau)$ is holomorphic on $\overline{\mathcal{R}}$.
By Couchy's integral formula, we have 
\[
\tilde{H}_m(s +i\tau) - \tilde{H}_{m, X}(s + i\tau)
=\frac{1}{2\pi i}\int_{\partial{\mathcal{R}}} \frac{\tilde{H}_m(z + i\tau) - \tilde{H}_{m, X}(z + i\tau)}{z - s}\, dz
\]
for any $s \in C$ and any $\tau \in \mathcal{X}_\mathcal{K}(T)$. 
Hence we obtain
\begin{align*}
&\int_{\mathcal{X}_\mathcal{K}(T)} \sup_{s \in C}|\tilde{H}_m(s + i\tau) - \tilde{H}_{m, X}(s + i\tau)|\, d\tau \\
&=\int_{\mathcal{X}_\mathcal{K}(T)} \sup_{s \in C}\left|\frac{1}{2\pi i}\int_{\partial{\mathcal{R}}} \frac{\tilde{H}_m(z +i\tau) -\tilde{H}_{m, X}(z + i\tau)}{z - s}\, dz \right|\, d\tau \\
&\le \frac{1}{2\pi \dist(C, \partial{\mathcal{R}})} \int_{\partial{\mathcal{R}}}|dz| \int_{\mathcal{X}_\mathcal{K}(T)}|\tilde{H}_m(z + i\tau) -\tilde{H}_{m, X}(z + i\tau)|\, d\tau \\
&\le \frac{|\partial{\mathcal{R}}|}{2\pi \dist(C, \partial{\mathcal{R}})} \sup_{\sigma: s \in \partial{\mathcal{R}}} \int_{\mathcal{X}'_\mathcal{K}(T)} |\tilde{H}_m(\sigma + it) -\tilde{H}_{m, X}(\sigma + it)|\, dt, 
\end{align*}
where $\mathcal{X}'_\mathcal{K}(T) =[T/2, {5T}/2] \setminus l(T; \sigma_0, Y(T))$, $\dist(C, \partial{\mathcal{R}})$ is minimal distance between $C$ and $\partial{\mathcal{R}}$, and  $|\partial{\mathcal{R}}|$ is the length of $\partial{\mathcal{R}}$.  

Here we have 
\begin{align*}
\int_{\mathcal{X}'_\mathcal{K}(T)} |\tilde{H}_m(\sigma + it) -\tilde{H}_{m, X}(\sigma + it)|\, dt 
&\le \int_{\mathcal{X}'_\mathcal{K}(T)} \bigg|\tilde{H}_m(\sigma +it) - \sum_{2 \le n \le Y(T)} \frac{\Lambda_\mathcal{L}(n)}{n^{\sigma + it} (\log{n})^{m+1}} \bigg|\, dt \\
&+\int_{\mathcal{X}'_\mathcal{K}(T)} \bigg|\sum_{2 \le n \le Y(T)} \frac{\Lambda_\mathcal{L}(n)}{n^{\sigma + it} (\log{n})^{m+1}}-\tilde{H}_{m, X}(\sigma +it)\bigg|\, dt.
\end{align*}
For the first term, we have 
\[
\int_{\mathcal{X}'_\mathcal{K}(T)} \bigg|\tilde{H}_m(\sigma +it) -\sum_{2 \le n \le Y(T)} \frac{\Lambda_\mathcal{L}(n)}{n^{\sigma +it} (\log{n})^{m+1}}\bigg|\, dt \ll_m \frac{\meas(\mathcal{X}'_\mathcal{K}(T))}{\log{T}}
\]
by Lemma~\ref{lemma poly2}.

For the second term, by the Cauchy--Schwarz inequality, we have
\begin{align*}
&\left(\int_{\mathcal{X}'_\mathcal{K}(T)} \bigg|\sum_{2 \le n \le Y(T)} \frac{\Lambda_\mathcal{L}(n)}{n^{\sigma +it} (\log{n})^{m+1}}-\tilde{H}_{m, X}(\sigma +it)\bigg|\, dt \right)^2 \\
&\le \meas(\mathcal{X}'_\mathcal{K}(T)) \int_{T/2}^{{5T}/2} \bigg|\sum_{2 \le n \le Y(T)} \frac{\Lambda_\mathcal{L}(n)}{n^{\sigma +it} (\log{n})^{m+1}}-\tilde{H}_{m, X}(\sigma +it)\bigg|^2\, dt.
\end{align*} 

Using the mean value theorem of Dirichlet polynomials \cite[Corollary~2]{MV}, we obtain 

\begin{align*}
&\int_{T/2}^{{5T}/2} \bigg|\sum_{2 \le n \le Y(T)} \frac{\Lambda_\mathcal{L}(n)}{n^{\sigma +it} (\log{n})^{m+1}}-\tilde{H}_{m, X}(\sigma +it)\bigg|^2\, dt \\
&\ll(T +O(Y(T))) \sum_{X< n \le Y(T)}\frac{|\Lambda_\mathcal{L}(n)|^2 (1-\varphi(n/X))^2}{n^{2\sigma} (\log{n})^{2m+2}} \\
&\ll T\sum_{X <n}\frac{|\Lambda_\mathcal{L}(n)|^2 }{n^{2\sigma} (\log{n})^{2m+2}}.
\end{align*} 

Since $\meas({\mathcal{I}_\mathcal{K}(T)}) \sim T$ as $T \to \infty$ and $\sigma > \sigma_\mathcal{L} > 1/2$, we have 
\begin{align*}
&\limsup_{T \to \infty} \frac{1}{\meas(\mathcal{I}_\mathcal{K}(T))} \int_{\mathcal{X}_\mathcal{K}(T)} \sup_{s \in C}|\tilde{H}_m(s +i\tau) -\tilde{H}_{m, X}(s +i\tau)|\, d\tau \\
&\ll \left( \sum_{X <n}\frac{|\Lambda_\mathcal{L}(n)|^2 }{n^{2\sigma} (\log{n})^{2m+2}}\right)^{\frac{1}{2}}
\to 0
\end{align*}
as $X \to \infty$.
\end{proof}

\begin{lemma} \label{lemma prob1}
The following statements hold.
\begin{enumerate}  
\item[(i)] For almost all $\omega \in \Omega$, $\sum_{p}\frac{b(p)\omega(p)}{p^s (\log{p})^m}$ is convergent in 
$\mathcal{H}(\mathcal{R})$. 
Furthermore, for almost all $\omega \in \Omega$, $\sum_{p}\sum_{k=1}^{\infty}\frac{b(p^k)\omega(p)^k}{p^{ks} (\log{p^k})^m}$ is convergent in $\mathcal{H}(\mathcal{R})$.

\item[(ii)] 
For almost all $\omega \in \Omega$, there exists a constant $C_1 = C_1(\mathcal{K}, \mathcal{L}, \omega) >0$ such that 
\[
|\tilde{H}_m(s, \omega)| \le C_1(|t| + 2)
\]
for $\Re(s) \ge (\sigma_0 + \sigma_1)/2$.

\item[(iii)] 
There exists a constant $C_2 = C_2(\mathcal{K}, \mathcal{L}) >0$ such that 
\[
|\mathbb{E}^\m[|\tilde{H}_m(s, \omega)|]| \le C_2(|t| +2)
\]
for $\Re(s) \ge (\sigma_0 + \sigma_1)/2$.

\end{enumerate}
\end{lemma}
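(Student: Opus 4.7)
The plan is to exploit the independence of $\{\omega(p)\}_p$ and apply second-moment methods, with Part (i) providing the convergence that makes (ii) and (iii) meaningful. For Part (i), I regard each summand $X_p(s, \omega) = b(p)\omega(p)/(p^s(\log p)^m)$ as an independent, mean-zero random element of the Fr\'echet space $\mathcal{H}(\mathcal{R})$ (zero mean from $\int_\gamma z\, d\m_p(z) = 0$). For any compact $K \subset \mathcal{R}$, choose a rectangle $R'$ with $K \subset R' \subset \overline{R'} \subset \mathcal{R}$. The Cauchy integral formula gives
\[
\sup_{s \in K}|X_p(s, \omega)|^2 \;\ll_{K, R'}\; \int_{\partial R'}|X_p(z, \omega)|^2\,|dz|,
\]
so taking $\m$-expectation and summing over $p$,
\[
\sum_p \mathbb{E}^{\m}\!\Bigl[\sup_{s \in K}|X_p|^2\Bigr] \;\ll\; \int_{\partial R'}\sum_p \frac{|b(p)|^2}{p^{2\Re(z)}(\log p)^{2m}}\, |dz|,
\]
which is finite since $\min_{z \in \partial R'}\Re(z) > \sigma_\mathcal{L} \ge 1/2$ and $|b(p)| = |a(p)| \ll_\varepsilon p^\varepsilon$ by (S1). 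Kolmogorov's convergence theorem for independent mean-zero random elements of a separable Hilbert space then yields almost-sure convergence of the partial sums in the $\sup_K$ seminorm, and a diagonal argument over the compact exhaustion $\{K_j\}$ of $\mathcal{R}$ promotes this to almost-sure convergence in $\mathcal{H}(\mathcal{R})$. The same scheme handles the full double sum, the $k \ge 2$ tail being controlled via $|b(p^k)| \ll p^{k\theta}$ from (S4), sharpened through Newton's identities and (S1) to $|b(p^k)| \ll_\varepsilon k\, p^{k\varepsilon}$ when needed.

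For Part (ii), set $\sigma_\ast = (\sigma_0 + \sigma_1)/2$. Orthogonality $\mathbb{E}^{\m}[\omega(p)^k \overline{\omega(p)^\ell}] = \delta_{k,\ell}$ together with independence across primes gives the closed form
\[
\mathbb{E}^{\m}\bigl[|\tilde{H}_m(\sigma + it, \omega)|^2\bigr] \;=\; \sum_p \sum_{k=1}^\infty \frac{|b(p^k)|^2}{(k\log p)^{2m}\, p^{2k\sigma}} \;=:\; A(\sigma),
\]
uniformly bounded by $A(\sigma_\ast) < \infty$ on $\{\sigma \ge \sigma_\ast\}$ and independent of $t$. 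To convert this into a pathwise estimate I cover $\{\sigma_\ast \le \Re(s) \le 2\}$ by rectangles $Q_n = [\sigma_\ast, 2] \times [n, n+1]$, $n \in \mathbb{Z}$, and handle $\{\Re(s) \ge 2\}$ separately via absolute convergence of the Euler product. The sub-mean-value inequality for the subharmonic $|\tilde{H}_m|^2$, applied on a fixed disk of radius $r \in (0, \sigma_\ast - \sigma_0)$ around each point of $Q_n$, gives $\sup_{s \in Q_n}|\tilde{H}_m(s, \omega)|^2 \ll_r \iint_{Q_n^+}|\tilde{H}_m|^2\, dA$, where $Q_n^+$ is a bounded thickening of $Q_n$ inside $\{\Re(s) > \sigma_0\}$. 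Taking $\m$-expectation yields $\mathbb{E}^{\m}[\sup_{Q_n}|\tilde{H}_m|^2] = O(1)$ uniformly in $n$; Markov's inequality then produces $\m\{\sup_{Q_n}|\tilde{H}_m| > |n|+2\} = O(n^{-2})$, and Borel--Cantelli yields $\sup_{Q_n}|\tilde{H}_m| \le |n|+2$ almost surely for all but finitely many $n$, hence $|\tilde{H}_m(s, \omega)| \le C_1(\omega)(|t|+2)$.

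Part (iii) follows immediately from the identity for $A(\sigma)$: Cauchy--Schwarz gives $\mathbb{E}^{\m}[|\tilde{H}_m(s, \omega)|] \le A(\sigma_\ast)^{1/2}$ uniformly on $\{\Re(s) \ge \sigma_\ast\}$, trivially dominated by $\tfrac{1}{2}A(\sigma_\ast)^{1/2}(|t|+2)$. The main obstacle is the $L^2$-to-pointwise upgrade in Part (ii): controlling $\mathbb{E}^{\m}[\sup_{Q_n}|\tilde{H}_m|^2]$ uniformly in $n$ requires the sub-mean-value disk to stay inside the half-plane $\{\Re(s) > \sigma_0\}$ on which the variance is uniformly bounded, which is exactly why the lemma is stated at the intermediate real part $(\sigma_0 + \sigma_1)/2$ rather than at $\sigma_0$ itself.
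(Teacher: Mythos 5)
Your proposal is essentially correct, but for parts (ii) and (iii) it takes a genuinely different route from the paper. For (i) both arguments ultimately rest on a Kolmogorov-type convergence theorem: the paper invokes the three-series theorem at a single abscissa and then the general theory of Dirichlet series to upgrade to convergence in $\mathcal{H}(\mathcal{R})$, while you run a second-moment criterion for independent mean-zero elements of $L^2(\partial R')$ and recover uniform convergence on $K$ via Cauchy's formula; one small correction to your write-up is that the Hilbert-space Kolmogorov theorem must be applied in $L^2(\partial R')$ first and Cauchy's formula then applied to the tails of the partial sums, since $\sup_K$ is not a Hilbert norm and finiteness of $\sum_p \mathbb{E}^{\m}[\sup_K|X_p|^2]$ by itself is not the hypothesis of that theorem. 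For (ii) the paper argues pathwise: by (i) the partial sums $S_u(\omega)=\sum_{2\le n\le u}\Lambda_{\mathcal{L}}(n)\omega(n)n^{-\sigma_0}(\log n)^{-m-1}$ are a.s.\ bounded by some $M(\omega)$, and the partial-summation identity $\tilde{H}_m(s,\omega)=(s-\sigma_0)\int_2^\infty S_u(\omega)u^{-(s-\sigma_0)-1}\,du$ immediately gives $|\tilde{H}_m(s,\omega)|\le M(\omega)|s-\sigma_0|/(\sigma-\sigma_0)\ll|t|+2$ on $\Re(s)\ge(\sigma_0+\sigma_1)/2$; the same identity with $\mathbb{E}^{\m}[|S_u|]$ bounded by Cauchy--Schwarz yields (iii). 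Your route instead computes the variance $A(\sigma)$ by orthogonality of $\{\omega(n)\}$ over prime powers and upgrades it to a pathwise bound by subharmonicity of $|\tilde H_m|^2$ on unit squares plus Markov and Borel--Cantelli, with (iii) falling out of Cauchy--Schwarz directly (and in fact giving a uniform constant, sharper than the stated $C_2(|t|+2)$). The paper's mechanism is shorter and handles (ii) and (iii) with one deterministic identity; yours is the more standard probabilistic $L^2$ machinery and is perfectly adequate here, at the cost of an extra Borel--Cantelli layer and of having to note that the finitely many exceptional squares can be absorbed into $C_1(\omega)$ because $\tilde H_m(\cdot,\omega)$ is a.s.\ holomorphic on a neighbourhood of each closed square. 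One caveat: your claim that Newton's identities and (S1) sharpen the bound to $|b(p^k)|\ll_\varepsilon k\,p^{k\varepsilon}$ is not available in the general Selberg class, where no polynomial Euler product is assumed; only the (S4) bound $b(p^k)\ll p^{k\theta}$ is at your disposal, which is exactly what the paper also uses for the $k\ge 2$ tails, so this remark should simply be dropped.
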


\begin{proof}
(i) It follows from Kolmogorov's three series theorem (see \cite[Theorem~B.10.1]{Ko}) and the property of Dirichlet series. 
\par
(ii) For $u \ge 2, \omega \in \Omega$, we define 
\begin{align*}
&S_u(\omega) = \sum_{2 \le n \le u} \frac{\Lambda_\mathcal{L}(n) \omega(n)}{n^{\sigma_0} (\log{n})^{m+1}}, \\
&A_u(\omega) = \sum_{p \le u}\frac{b(p)\omega(p)}{p^s (\log{p})^m}.
 \end{align*}
Here $A_u(\omega)$ is convergent as $u \to \infty$ for almost all $\omega \in \Omega$ by (i), 
and 

\[
S_u(\omega) = A_u(\omega) + \sum_{\substack{p^k \le u \\ k \ge 2}} \frac{b(p^k) \omega(p^k)}{k^mp^{k\sigma_0} (\log{p})^{m}}
\]
holds.
Since the second term is bounded, there exists $M = M(\mathcal{K}, \mathcal{L}, \omega) >0$ such that 
$|S_u(\omega)| \le M$ for $u \ge 2$ for almost all $\omega \in \Omega$.
We fix such $\omega \in \Omega$. 
Then, for $\Re(s) \ge (\sigma_0 + \sigma_1)/2$, we have  
\begin{align*}
\tilde{H}_m(s, \omega) &= \int_{2^{-}}^{\infty} \frac{dS_u(\omega)}{u^{s -\sigma_0}} \\
                       &= (s -\sigma_0) \int_{2}^{\infty} \frac{S_u(\omega)}{u^{s -\sigma_0+1}}\, du \\
                       &\ll_{\mathcal{K}, \mathcal{L}, \omega} \frac{|s -\sigma_0|}{\sigma -\sigma_0} \\
                       &\ll |t| +2.
\end{align*}
\par
(iii) By the Cauchy--Schwarz inequality, we notice  
\begin{align*}
\mathbb{E}^\m[|S_u(\omega)|] &\le \left(\mathbb{E}^\m\left[\left|\sum_{2 \le n \le u} \frac{\Lambda_\mathcal{L}(n) \omega(n)}{n^{\sigma_0}(\log{n})^{m+1}}\right|^2\right] \right)^{\frac{1}{2}} \\
                     &\le \left(\sum_{2 \le n \le u} \frac{|\Lambda_\mathcal{L}(n)|^2}{n^{2\sigma_0}(\log{n})^{2m+2}} \right)^{\frac{1}{2}} \\
                     &< M.
\end{align*}
Here $M = M(\mathcal{K}, \mathcal{L})>0$ is a constant. 
Thus, for $\Re(s) \ge (\sigma_0 + \sigma_1)/2$, we have
\begin{align*}
\mathbb{E}^{\m}\left[\left|\tilde{H}_m(s, \omega)\right|\right] &\le |s -\sigma_0| \int_{2}^{\infty} \frac{\mathbb{E}^\m[|S_u(\omega)|]}{u^{\sigma -\sigma_0+1}}\, du\\
                                  &\le M \frac{|s -\sigma_0|}{\sigma -\sigma_0} \\
                                  &\ll |t| +2.
\end{align*}                               
\end{proof}

\begin{lemma} \label{lemma prob2}
For any compact subset $C$ of $\mathcal{R}$, we have 

\[
\lim_{X \to \infty} \mathbb{E}^{\m}[\sup_{s \in C}|\tilde{H}_m(s, \omega) -\tilde{H}_{m, X}(s, \omega)|] =0.
\]

\end{lemma}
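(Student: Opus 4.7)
The plan is to mirror the proof of Lemma~\ref{lemma int conv}, with the mean value theorem for Dirichlet polynomials replaced by Parseval's identity for the orthonormal system $\{\omega \mapsto \omega(n)\}_{n \ge 1}$ in $L^{2}(\Omega, \m)$, which arises from the independence of $\{\omega(p)\}_{p}$ and the fact that each $\omega(p)$ is Haar-uniform on $\gamma$.

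First, I would fix a compact set $C \subset \mathcal{R}$ and choose an open rectangle $\mathcal{R}'$ with $C \subset \mathcal{R}' \subset \overline{\mathcal{R}'} \subset \mathcal{R}$, so that $\delta := \dist(C, \partial \mathcal{R}') > 0$. By Lemma~\ref{lemma prob1}(i), $\tilde{H}_{m}(\,\cdot\,, \omega) \in \mathcal{H}(\mathcal{R})$ for almost every $\omega \in \Omega$; moreover $\tilde{H}_{m, X}(\,\cdot\,, \omega)$ is a finite Dirichlet polynomial (since $\varphi$ has compact support), hence entire. Cauchy's integral formula thus yields, for almost every $\omega$,
\begin{equation*}
\sup_{s \in C} |\tilde{H}_{m}(s, \omega) - \tilde{H}_{m, X}(s, \omega)|
\le \frac{1}{2\pi \delta} \int_{\partial \mathcal{R}'} |\tilde{H}_{m}(z, \omega) - \tilde{H}_{m, X}(z, \omega)| \, |dz|.
\end{equation*}
Taking $\mathbb{E}^{\m}$, applying Tonelli's theorem (the integrand is nonnegative), and then Cauchy--Schwarz in $L^{2}(\Omega, \m)$ gives
\begin{equation*}
\mathbb{E}^{\m} \Big[\sup_{s \in C} |\tilde{H}_{m}(s, \omega) - \tilde{H}_{m, X}(s, \omega)|\Big]
\le \frac{1}{2\pi \delta} \int_{\partial \mathcal{R}'} \mathbb{E}^{\m} \big[ |\tilde{H}_{m}(z, \omega) - \tilde{H}_{m, X}(z, \omega)|^{2} \big]^{1/2} |dz|.
\end{equation*}

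For fixed $z = \sigma + it \in \partial \mathcal{R}'$, I would compute the second moment by Parseval: since $\mathbb{E}^{\m}[\omega(n)\overline{\omega(k)}] = \delta_{n, k}$ (a consequence of $\int_{\gamma} u^{j}\, d\m_{p}(u) = \delta_{j, 0}$ for $j \in \mathbb{Z}$ together with independence across $p$), one finds
\begin{equation*}
\mathbb{E}^{\m} \big[ |\tilde{H}_{m}(z, \omega) - \tilde{H}_{m, X}(z, \omega)|^{2} \big]
= \sum_{n \ge 2} \frac{|\Lambda_{\mathcal{L}}(n)|^{2} (1 - \varphi(n/X))^{2}}{n^{2\sigma} (\log n)^{2m+2}}.
\end{equation*}
Because $\varphi \equiv 1$ on a neighborhood of the origin, the summand vanishes for $n$ below a constant multiple of $X$; and because $\sigma > \sigma_{0} > 1/2$ uniformly for $z \in \partial \mathcal{R}'$, the full series $\sum_{n \ge 2} |\Lambda_{\mathcal{L}}(n)|^{2}/(n^{2\sigma}(\log n)^{2m+2})$ converges, so its tail tends to $0$ as $X \to \infty$ uniformly in $\sigma$ on $\partial \mathcal{R}'$. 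Integrating this uniform bound against $|dz|$ along the contour of finite length yields the conclusion.

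The most delicate point is justifying the Parseval identity for the full series $\tilde{H}_{m}(s, \omega)$ rather than for a finite truncation alone: one must verify that the partial sums converge to $\tilde{H}_{m}(s, \omega)$ in $L^{2}(\Omega, \m)$, not merely almost surely as furnished by Lemma~\ref{lemma prob1}(i). This is achieved by observing that the sum $\sum_{n \ge 2} |\Lambda_{\mathcal{L}}(n)|^{2}/(n^{2\sigma}(\log n)^{2m+2})$ is finite in the strip of interest (the same convergence implicitly used in the proof of Lemma~\ref{lemma int conv}), so the partial sums are Cauchy in $L^{2}(\Omega, \m)$ and their $L^{2}$-limit must coincide with the almost-sure limit $\tilde{H}_{m}(s, \omega)$.
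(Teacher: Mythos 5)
Your argument is correct, but it takes a genuinely different route from the paper's. The paper proves this lemma by writing $\tilde{H}_{m,X}(z,\omega)$ as a Mellin integral $\frac{1}{2\pi i}\int_{(c)}\tilde{H}_m(z+u,\omega)\hat{\varphi}(u)X^u\,du$, shifting the contour to $\Re(u)=-\delta$ with $\delta=(\sigma_1-\sigma_0)/4$ (the residue at $u=0$ producing $\tilde{H}_m(z,\omega)$ via Lemma~\ref{lemma smooth}(1)), and then controlling the shifted integral using the \emph{first}-moment bound $\mathbb{E}^{\m}[|\tilde{H}_m(s,\omega)|]\ll |t|+2$ of Lemma~\ref{lemma prob1}(iii) together with the rapid decay of $\hat{\varphi}$; this yields the explicit rate $O(X^{-\delta})$. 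You instead pass by Cauchy--Schwarz to the \emph{second} moment and evaluate it exactly by the orthogonality $\mathbb{E}^{\m}[\omega(n)\overline{\omega(k)}]=\delta_{n,k}$, reducing the lemma to the tail of the convergent series $\sum_n |\Lambda_{\mathcal{L}}(n)|^2 n^{-2\sigma}(\log n)^{-2m-2}$ — exactly the quantity that controls Lemma~\ref{lemma int conv}, with orthogonality on $(\Omega,\m)$ playing the role of the Montgomery--Vaughan mean value theorem. What your approach buys is economy and symmetry with Lemma~\ref{lemma int conv}: it bypasses the Mellin machinery and parts (ii)--(iii) of Lemma~\ref{lemma prob1} entirely, at the cost of not producing a power-of-$X$ rate (which is not needed for Proposition~\ref{proposition1}). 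The one genuinely delicate step — that Parseval may be applied to the full random series and not just to its truncations — you identify and resolve correctly: the partial sums are Cauchy in $L^2(\Omega,\m)$, and the $L^2$-limit agrees almost surely with the almost-sure limit along a subsequence. Note that both proofs ultimately rest on the finiteness of $\sum_n |\Lambda_{\mathcal{L}}(n)|^2 n^{-2\sigma}(\log n)^{-2m-2}$ for $\sigma>\sigma_0$ (via (S6) for the prime terms and $b(p^k)\ll p^{k\theta}$ for the higher prime powers), which the paper already invokes in Lemma~\ref{lemma int conv} and in the proof of Lemma~\ref{lemma prob1}(iii), so you introduce no new hypotheses.
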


\begin{proof}
Let $\omega \in \Omega$ satisfying Lemma~\ref{lemma prob1}(i) and (ii). 
We recall 
\begin{align*}
&\tilde{H}_{m, X}(z, \omega) = \sum_{n=2}^{\infty}  \frac{\Lambda_\mathcal{L}(n) \omega(n) \varphi(n/X)}{n^z (\log{n})^{m+1}}, \\
&\varphi(n/X) = \frac{1}{2 \pi i}\int_{c - i \infty}^{c + i \infty} \hat{\varphi}(u)(n/X)^{-u}\, du. 
\end{align*}
Hence it can be expressed as 
\[
\tilde{H}_{m, X}(z, \omega) = \frac{1}{2 \pi i}\int_{c - i \infty}^{c + i \infty} \tilde{H}_{m}(z+u, \omega)\hat{\varphi}(u)X^u \, du.
\]
for all $z \in \partial{\mathcal{R}}$ and $c > 1$.

We put $\delta = (\sigma_1 -\sigma_0)/4$. 
Then, by Lemma~\ref{lemma smooth} and Lemma~\ref{lemma prob1} (ii), we have 
\[
\tilde{H}_m(z, \omega) -\tilde{H}_{m, X}(z, \omega)
= -\frac{1}{2 \pi i}\int_{-\delta - i \infty}^{-\delta + i \infty} \tilde{H}_{m}(z+u, \omega)\hat{\varphi}(u)X^u\, du.
\]
Using Cauchy's integral formula, we derive 
\begin{align*}
&\sup_{s \in C}|\tilde{H}_m(s, \omega) -\tilde{H}_{m, X}(s, \omega)| \\
&= \sup_{s \in C}\bigg|\frac{1}{2 \pi i}\int_{\partial{\mathcal{R}}}\frac{\tilde{H}_m(z, \omega) -\tilde{H}_{m, X}(z, \omega)}{z-s}\, dz\bigg| \\
&\le \frac{1}{2 \pi \dist(C, \partial{\mathcal{R}})} \int_{\partial{\mathcal{R}}}|\tilde{H}_m(z, \omega) -\tilde{H}_{m, X}(z, \omega)|\, |dz| \\
&\le \frac{X^{-\delta}}{4 \pi^2 \dist(C, \partial{\mathcal{R}})} \int_{\partial{\mathcal{R}}}|dz| \int_{-\infty}^{\infty} |\tilde{H}_{m}(-\delta+z +it, \omega)\hat{\varphi}(-\delta +it)|\, dt \\
&\le \frac{X^{-\delta}|\partial{\mathcal{R}}|}{4 \pi^2 \dist(C, \partial{\mathcal{R}})} \sup_{z \in \partial{\mathcal{R}}} \int_{-\infty}^{\infty} |\tilde{H}_{m}(-\delta+z +it, \omega)\hat{\varphi}(-\delta +it)|\, dt.
\end{align*}

Therefore, by Lemma~\ref{lemma prob1} (iii), we have 
\begin{align*}
&\mathbb{E}^{\m}[\sup_{s \in C}|\tilde{H}_m(s, \omega) -\tilde{H}_{m, X}(s, \omega)|] \\
&\le  \frac{X^{-\delta}|\partial{\mathcal{R}}|}{4 \pi^2 \dist(C, \partial{\mathcal{R}})} \sup_{z \in \partial{\mathcal{R}}} \int_{-\infty}^{\infty} \mathbb{E}^\m[|\tilde{H}_{m}(-\delta+z +it, \omega)|]|\hat{\varphi}(-\delta +it)|\, dt \\
&\ll X^{-\delta} \to 0
\end{align*}
as $X \to \infty$.
\end{proof}

We prepare the next materials to prove Proposition~\ref{proposition1}. 
We define the probability measure on $(\mathcal{I}_\mathcal{K}(T), \mathcal{B}(\mathcal{I}_\mathcal{K}(T)))$ by 
\[
\mathbb{P}_T(A) = \frac{1}{\meas({\mathcal{I}_\mathcal{K}(T)})} \meas (A), 
\]
for $A \in \mathcal{B}(\mathcal{I}_\mathcal{K}(T))$. 
Furthermore, let $\mathcal{P}_0$ be a finite set of prime numbers, and we define the probability measure on 
$(\prod_{p \in \mathcal{P}_0} \gamma_p, \mathcal{B}(\prod_{p \in \mathcal{P}_0} \gamma_p))$ by 
\[
\mathbb{H}^{\mathcal{P}_0}_T(A) = \frac{1}{\meas({\mathcal{I}_\mathcal{K}(T)})} \meas \left\{ \tau \in \mathcal{I}_\mathcal{K}(T) : (p^{i\tau})_{p \in \mathcal{P}_0} \in A \right\}, 
\]
$A \in  \mathcal{B}(\prod_{p \in \mathcal{P}_0} \gamma_p)$. 
Then, the next lemma holds. 

\begin{lemma} [see {\cite[Lemma~2.10]{En}}] \label{lemma weak conv} 
The probability measure $\mathbb{H}^{\mathcal{P}_0}_T$ converges weakly to $\m_{\mathcal{P}_0}$ as $T \to \infty$. 
Here  we denote $\m_{\mathcal{P}_0} = \otimes_{p \in \mathcal{P}_0} \m_p$. 
\end{lemma}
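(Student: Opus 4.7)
The plan is to apply the standard equidistribution-on-the-torus criterion. Because $\prod_{p \in \mathcal{P}_0} \gamma_p$ is a compact abelian group, by Peter--Weyl (equivalently, Stone--Weierstrass applied to trigonometric polynomials) the characters form a total family in $C(\prod_{p} \gamma_p)$. Hence weak convergence $\mathbb{H}_T^{\mathcal{P}_0} \Rightarrow \m_{\mathcal{P}_0}$ is equivalent to the convergence of every Fourier coefficient, and it suffices to prove, for each $(k_p)_{p \in \mathcal{P}_0} \in \mathbb{Z}^{|\mathcal{P}_0|}$,
\[
\int_{\prod_p \gamma_p} \prod_{p \in \mathcal{P}_0} z_p^{k_p}\, d\mathbb{H}_T^{\mathcal{P}_0}(z)
\;\longrightarrow\;
\int_{\prod_p \gamma_p} \prod_{p \in \mathcal{P}_0} z_p^{k_p}\, d\m_{\mathcal{P}_0}(z)
\qquad (T \to \infty).
\]
The right-hand side is $1$ when all $k_p = 0$ and $0$ otherwise, whereas unwinding the definition of $\mathbb{H}_T^{\mathcal{P}_0}$ the left-hand side equals $\meas(\mathcal{I}_\mathcal{K}(T))^{-1} \int_{\mathcal{I}_\mathcal{K}(T)} e^{i\alpha\tau}\, d\tau$ with $\alpha = \sum_{p \in \mathcal{P}_0} k_p \log p$.

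The case $(k_p) = (0)$ is trivial. For a nontrivial tuple, I will use the $\mathbb{Q}$-linear independence of $\{\log p\}_p$ (immediate from unique factorization) to conclude $\alpha \neq 0$. Then I split
\[
\int_{\mathcal{I}_\mathcal{K}(T)} e^{i\alpha\tau}\, d\tau
= \int_T^{2T} e^{i\alpha\tau}\, d\tau
- \int_{[T,2T] \setminus \mathcal{I}_\mathcal{K}(T)} e^{i\alpha\tau}\, d\tau,
\]
bound the first integral in modulus by $2/|\alpha|$ via direct evaluation, and bound the second in modulus by $\meas\bigl([T,2T] \setminus \mathcal{I}_\mathcal{K}(T)\bigr)$.

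The remaining step is to show that the exceptional set has measure $o(T)$; combined with $\meas(\mathcal{I}_\mathcal{K}(T)) \sim T$ this will finish the proof. By the definition of $\mathcal{G}_{\sigma_0, |\mathcal{K}|+1}$, the set $[T,2T] \setminus \mathcal{I}_\mathcal{K}(T)$ is contained in the union of intervals $(\gamma_\mathcal{L} - \tau_0(\mathcal{K}) - (|\mathcal{K}|+1),\, \gamma_\mathcal{L} - \tau_0(\mathcal{K}) + (|\mathcal{K}|+1))$ over zeros with $\beta_\mathcal{L} > \sigma_0 \ge \sigma_\mathcal{L}$ and $\gamma_\mathcal{L}$ in a bounded dilation of $[T, 2T]$, together with a single fixed-length interval near $-\tau_0(\mathcal{K})$. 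Its measure is therefore $\ll_\mathcal{K} N_\mathcal{L}(\sigma_0, 3T)$, which by (S5) is $o\bigl(T/(\log T)^\alpha\bigr)$ for any $\alpha > 0$, and in particular $o(T)$. Consequently
\[
\frac{1}{\meas(\mathcal{I}_\mathcal{K}(T))} \int_{\mathcal{I}_\mathcal{K}(T)} e^{i\alpha\tau}\, d\tau
= O\!\left(\frac{1}{T|\alpha|}\right) + o(1) = o(1),
\]
as required. I do not anticipate a substantial obstacle: this is the familiar Kronecker--Weyl equidistribution of $(p^{i\tau})_{p \in \mathcal{P}_0}$ on a finite-dimensional torus, and the only $\mathcal{L}$-dependent ingredient is the smallness of the exceptional set, which is immediate from the zero-density bound (S5).
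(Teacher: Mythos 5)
Your proof is correct: the reduction of weak convergence on the compact group $\prod_{p\in\mathcal{P}_0}\gamma_p$ to convergence of Fourier coefficients, the use of the $\mathbb{Q}$-linear independence of $\{\log p\}$ to get $\alpha\neq 0$ for nontrivial characters, and the zero-density estimate (S5) to show $\meas([T,2T]\setminus\mathcal{I}_\mathcal{K}(T))=o(T)$ are exactly the right ingredients. The paper gives no proof of this lemma, deferring entirely to \cite[Lemma~2.10]{En}, and your argument is the standard Weyl-criterion proof used there, so there is nothing further to compare.
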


\begin{proof} [Proof of Proposition~\ref{proposition1}]
By Portmanteau's theorem (See \cite[Theorem~13.16]{Kl}), 
we shall show that $|\mathbb{E}^{\mathcal{Q}_T}[F] - \mathbb{E}^{\mathcal{Q}}[F]| \to 0$ as $T \to \infty$ for all 
bounded Lipschitz continuous function $F$ : $\mathcal{H}(\mathcal{R}) \to \mathbb{R}$.
Let $F$ be a bounded real-valued Lipschitz function.
Then there exist nonnegative constants $C_1, C_2$ such that
\[
|F(f)| \le C_1,\ |F(f) - F(g)| \le C_2 d(f, g)
\]
for all $f, g \in \mathcal{H}(\mathcal{R})$.
Now  
\begin{align*}
|\mathbb{E}^{\mathcal{Q}_T}[F] - \mathbb{E}^{\mathcal{Q}}[F]| 
&= |\mathbb{E}^{\mathbb{P}_T}[F(\tilde{H}_m(s + i\tau))] - \mathbb{E}^{\m}[F(\tilde{H}_m(s, \omega))]| \\
&\le |\mathbb{E}^{\mathbb{P}_T}[F(\tilde{H}_m(s + i\tau))] - \mathbb{E}^{\mathbb{P}_T}[F(\tilde{H}_{m, X}(s +i\tau))]| \\
&\ + |\mathbb{E}^{\mathbb{P}_T}[F(\tilde{H}_{m, X}(s + i\tau))] - \mathbb{E}^{\m}[F(\tilde{H}_{m, X}(s, \omega))]| \\
&\ + |\mathbb{E}^{\m}[F(\tilde{H}_{m, X}(s, \omega))] - \mathbb{E}^{\m}[F(\tilde{H}_{m}(s, \omega))]|
\end{align*}
holds for $X \ge 2$. 
We estimate each term.

For the first term, 
\begin{align*}
&|\mathbb{E}^{\mathbb{P}_T}[F(\tilde{H}_m(s + i\tau))] -\mathbb{E}^{\mathbb{P}_T}[F(\tilde{H}_{m, X}(s + i\tau))]| \\
&\le \int_{\mathcal{I}_\mathcal{K}(T)} |F(\tilde{H}_m(s + i\tau)) - F(\tilde{H}_{m, X}(s + i\tau))| \, d\mathbb{P}_T(\tau) \\
&\le 2C_1(F) \frac{\meas{(\mathcal{I}_\mathcal{K}(T) \setminus \mathcal{X}_\mathcal{K}(T))}}{\meas(\mathcal{I}_\mathcal{K}(T))} 
+ C_2(F) \sum_{j=1}^{\infty}\frac{1}{2^j} \int_{\mathcal{X}_\mathcal{K}(T)} \frac{d_j(\tilde{H}_m(s + i\tau), \tilde{H}_{m, X}(s + i\tau))}{1 +d_j(\tilde{H}_m(s + i\tau), \tilde{H}_{m, X}(s + i\tau))}\,  d\mathbb{P}_T(\tau). 
\end{align*}
Using $\mathcal{I}_\mathcal{K}(T) \sim T$, $\meas(\mathcal{X}_\mathcal{K}(T)) \sim T$ as $T \to \infty$, Lebesgue's dominated convergence theorem and Lemma~\ref{lemma int conv}, we have 
\begin{align*}
&\lim_{X \to \infty}\limsup_{T \to \infty} |\mathbb{E}^{\mathbb{P}_T}[F(\tilde{H}_m(s + i\tau))] -\mathbb{E}^{\mathbb{P}_T}[F(\tilde{H}_{m, X}(s + i\tau))]| \\
&\ll  \sum_{j=1}^{\infty}\frac{1}{2^j}\lim_{X \to \infty}\limsup_{T \to \infty}  \int_{\mathcal{X}_\mathcal{K}(T)} d_j(\tilde{H}_m(s + i\tau), \tilde{H}_{m, X}(s + i\tau))\,  d\mathbb{P}_T(\tau) = 0. 
\end{align*}

We consider the second term. 
Let 
\[
\mathcal{P}(\varphi, X) = \left\{p : \text{$p$ prime},\ p | \prod_{\substack{n \in \mathbb{N} \\ \varphi(n/X) \ne 0}} n \right\}.
\]
Definition of $\varphi$ implies that $\mathcal{P}(\varphi, X)$ is finite. 
We define the mapping 
\[
\Psi_{m, X}: \prod_{p \in \mathcal{P}(\varphi, X)} \gamma_p \ni (x_p)_{p \in \mathcal{P}(\varphi, X)} 
\mapsto \sum_{\substack{n=2 \\ \varphi(n/X) \ne 0}}^{\infty} \frac{\Lambda_\mathcal{L}(n) \varphi(n/X)}{n^s (\log{n})^{m+1}} \prod_{p | n}x_p^{-\nu(p ; n)} \in \mathcal{H}(\mathcal{R}).
\]
Then, $\Psi_{m, X}$ is continuous, so we have 
\[
\mathbb{E}^{\mathbb{H}^{\mathcal{P}(\varphi, X)}_T \circ \Psi^{-1}_{m, X}}[F]
= \mathbb{E}^{\mathbb{P}_T}[F(\tilde{H}_{m, X}(s +i\tau))],
\]
\[
\mathbb{E}^{\m_{\mathcal{P}(\varphi, X)} \circ \Psi^{-1}_{m, X}}[F] 
= \mathbb{E}^{\m}[F(\tilde{H}_{m, X}(s, \omega))].
\]

Therefore, by the mapping theorem (for example see \cite[Section2, The Mapping Theorem]{Bi}) and Lemma~\ref{lemma weak conv}, we have 
\[
\mathbb{E}^{\mathbb{P}_T}[F(\tilde{H}_{m, X}(s + i\tau))] = \mathbb{E}^{\mathbb{H}^{\mathcal{P}(\varphi, X)}_T \circ \Psi^{-1}_{m, X}}[F]
\xrightarrow[T \to \infty]{} \mathbb{E}^{\m_{\mathcal{P}(\varphi, X)} \circ \Psi^{-1}_{m, X}}[F] 
= \mathbb{E}^{\m}[F(\tilde{H}_{m, X}(s, \omega))].
\]

For the last term, 
using Lebesgue's dominated convergence theorem and Lemma~\ref{lemma prob2}, we have 
\begin{align*}
&\lim_{X \to \infty} |\mathbb{E}^{\m}[F(\tilde{H}_{m, X}(s, \omega))] -\mathbb{E}^{\m}[F(\tilde{H}_{m}(s, \omega))]| \\
&\le C_2(F) \sum_{j=1}^{\infty}\frac{1}{2^j} \lim_{X \to \infty} \mathbb{E}^\m[\sup_{s \in K_j}|\tilde{H}_m(s, \omega) - \tilde{H}_{m, X}(s, \omega)|] = 0.
\end{align*}

Thus $\mathcal{Q}_T$ converges weakly to $\mathcal{Q}$.  
\end{proof}

\subsection{Proof of Proposition~\ref{proposition2}} 

\begin{proposition} \label{density}
Let $\mathcal{L} \in \mathcal{S}$ satisfying $(S6)$. 
We define 
\[
g_p(s, \omega(p)) = \sum_{k=1}^{\infty} \frac{\Lambda_{\mathcal{L}}(p^k) \omega(p)^k}{(\log{p^k})^{m+1} p^{ks} }
 = \sum_{k=1}^{\infty} \frac{b(p^k) \omega(p)^k}{(\log{p^k})^{m} p^{ks}}
\]
for all prime $p$ and $\omega(p) \in \gamma$. 
Then the set of all convergent series 
\[
\sum_p g_p(s, \omega(p))
\]
with $\omega(p) \in \gamma$ is dense in $\mathcal{H}(\mathcal{R})$
\end{proposition}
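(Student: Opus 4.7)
The proof I would pursue follows the classical Bagchi--Pecherski framework as refined in the Selberg class setting by Nagoshi and Steuding. First I would localize to a Hilbert space: since topological density in $\mathcal{H}(\mathcal{R})$ can be tested on an exhaustion $\mathcal{R} = \bigcup_j K_j$, and each $K_j$ is contained in some simply connected domain $D$ with $\overline{D} \subset \mathcal{R}$, it suffices to prove density in the Bergman Hilbert space $H := H^2(D)$ for every such $D$, using that convergence in $H$ implies uniform convergence on compacta interior to $D$ via the reproducing kernel. The abstract workhorse is Bagchi's denseness lemma: a sequence $\{u_n\} \subset H$ has the property that the set $\{\sum_n c_n u_n : c_n \in \gamma,\ \text{the series converges in } H\}$ is dense in $H$, provided (a) $\sum_n \|u_n\|_H^2 < \infty$, and (b) for every complex Borel measure $\mu$ of compact support in $D$, the finiteness $\sum_n \bigl|\int u_n \, d\mu\bigr| < \infty$ forces $\int s^k\, d\mu(s) = 0$ for all integers $k \ge 0$.

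I would apply the lemma with $u_p := g_p(s,1)$. Condition (a) decomposes along $k$: the $k \ge 2$ part is controlled using (S4), since $|b(p^k)| \ll p^{k\theta}$ with $\theta < 1/2$ and $\sigma > \sigma_\mathcal{L} \ge 1/2$ on $\overline{D}$ make these terms absolutely summable; the dominant $k=1$ part contributes (up to bounded constants)
\[
\sum_p \frac{|a(p)|^2}{p^{2\sigma_1}(\log p)^{2m}},
\]
which converges by (S6) and Abel summation applied to the asymptotic $\sum_{p \le x}|a(p)|^2 \sim \kappa\,\pi(x)$.

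The main obstacle is verifying (b). Given a measure $\mu$ as in (b), set $\rho(z) := \int z^{-s}\, d\mu(s)$. After extracting the dominant $k=1$ contribution to $\int g_p(\cdot,1)\, d\mu = a(p)(\log p)^{-m}\rho(p) + (\text{abs.\ summable tail})$, the hypothesis yields $\sum_p |a(p)||\rho(p)|/((\log p)^m p^{\sigma_1}) < \infty$. Here (S6) is needed in a stronger form than the mere averaged bound it provides: the set $\mathcal{P}_\kappa := \{p : |a(p)|^2 \ge \kappa/2\}$ has positive lower density, so restricting the sum yields $\sum_{p \in \mathcal{P}_\kappa} |\rho(p)|/(p^{\sigma_1}(\log p)^m) < \infty$. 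Setting $\psi(z) := \int e^{-sz}\, d\mu(s)$ produces an entire function of exponential type whose values along the positive-density subsequence $\{\log p\}_{p \in \mathcal{P}_\kappa}$ decay sufficiently rapidly. A Carlson--Polya-type interpolation argument---combining the exponential-type bound on $\psi$ with the density of $\{\log p\}$ furnished by the prime number theorem---then forces $\psi \equiv 0$. Consequently $\mu$ annihilates $z^{-s}$ for $z$ in a nonempty open subset of $\mathbb{R}_{>1}$; differentiating in $z$ and evaluating at a fixed point gives $\int s^k\, d\mu = 0$ for every $k \ge 0$, as required.

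The genuinely delicate step---and the principal obstacle---is precisely this final transition, from summability of $|\psi(\log p)|/p^{\sigma_1}$ along a thinned prime subsequence to the identical vanishing of the entire function $\psi$. This is the point at which the positive-density content of (S6), rather than only the $L^2$ averaging it yields, does essential work; the natural density of primes where $|a(p)|$ is bounded below combined with the exponential-type structure of $\psi$ must be weighed against potential growth of $\psi$ along imaginary directions, and this balance is what rules out nontrivial $\mu$.
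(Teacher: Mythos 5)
Your overall architecture --- reduction to a Hilbert space of analytic functions, the Pecherskii--Bagchi rearrangement lemma with conditions (a) and (b), and verification of (b) via an entire function $\psi(z)=\int e^{-sz}\,d\mu(s)$ of exponential type evaluated at the points $\log p$ --- is exactly the route the paper takes: its proof of this proposition is a one-line reference to Proposition~1 of Nagoshi--Steuding, whose argument has precisely this shape. The gap is in the arithmetic input you feed into the Carlson--P\'olya step. You assert that $(S6)$ gives positive lower density for $\mathcal{P}_\kappa=\{p:|a(p)|^2\ge\kappa/2\}$. It does not. The asymptotic $\sum_{p\le x}|a(p)|^2\sim\kappa\pi(x)$ only shows that the primes with $|a(p)|^2>\kappa/2$ carry a positive proportion of the \emph{mass}; to convert mass into counting density one needs a pointwise or fourth-moment bound on $a(p)$, and in the Selberg class only the Ramanujan bound $a(p)\ll p^{\varepsilon}$ is available. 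That bound is compatible with all the mass sitting on a set of relative density zero (for instance $|a(p)|^2\asymp\log\log p$ on a set of about $\pi(x)/\log\log x$ primes is consistent with (S1) and (S6)), so $\mathcal{P}_\kappa$ may have density zero. This is exactly where the Selberg-class case differs from the cusp-form case of Laurin\v{c}ikas--Matsumoto, where Deligne's bound $|a(p)|\le 2$ makes your density claim valid.

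The correct substitute --- and the actual content of the Nagoshi--Steuding argument --- is a statement that is neither implied by nor implies positive density, but is what the entire-function lemma really needs: for every $\delta>1$ there is $c>0$ such that every interval $(x,\delta x]$ with $x$ large contains a prime with $|a(p)|>c$ (otherwise $\sum_{x<p\le\delta x}|a(p)|^2\le c^2(\pi(\delta x)-\pi(x))$, contradicting (S6) once $c^2<\kappa$). Selecting one such prime from each block $(\delta^{2m},\delta^{2m+1}]$ produces a sequence with $\log p_m=2m\log\delta+O(1)$, i.e.\ separated points of average gap $\beta=2\log\delta$ on which $|a(p_m)|$ is bounded below; the Laurin\v{c}ikas lemma on entire functions of exponential type then forces $\psi\equiv 0$ provided $\alpha\beta<\pi$, where $\alpha$ controls the growth of $\psi$ on the imaginary axis and is determined by the height of $\supp\mu$. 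Note that this inequality is why one needs $\delta$ arbitrarily close to $1$: even a genuine positive density, with an unspecified density constant, would yield a fixed $\beta$ and the type condition could fail for a tall region $\mathcal{R}$. So the step you flagged as delicate is indeed the crux, but the mechanism is this block-by-block lower bound extracted from the full asymptotic in (S6), not a positive-density statement; as written, your verification of condition (b) does not go through.
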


\begin{proof}
We can prove this proposition in the same way as \cite[Proposition~1]{NS}.
\end{proof}

\begin{lemma} \label{lemma prob3}
Let $G$ be a simply connected region in $\mathbb{C}$.  
Let $\{X_n\}_{n=1}^{\infty}$ be a sequence of independent $\mathcal{H}(G)$-valued random elements and suppose that $X = \sum_{n=1}^{\infty}X_n$ converges almost surely. 
Then the support of $X$ is the closure of the set of all $f \in \mathcal{H}(G)$ which may be written as a convergent series $f = \sum_{n=1}^{\infty} f_n$ with $f_n \in \supp(X_n)$.

\end{lemma}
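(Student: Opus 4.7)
The plan is to establish the two inclusions separately. Write $S = \{f \in \mathcal{H}(G) : f = \sum_{n=1}^{\infty} f_n \text{ converges with } f_n \in \supp(X_n)\}$; we must show $\supp(X) = \overline{S}$.

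For the inclusion $\supp(X) \subseteq \overline{S}$, I would argue by contradiction and use the fact that $\mathcal{H}(G)$ (with the compact-open topology induced by $d$) is a Polish space, so for each $n$ one has $\mathbb{P}(X_n \in \supp(X_n)) = 1$. Since the series $\sum X_n$ converges almost surely, a countable intersection of full-measure events gives $\mathbb{P}(X \in S) = 1$, hence $\mathbb{P}(X \in \overline{S}) = 1$. If some $f \in \supp(X)$ were outside $\overline{S}$, then $f$ would have an open neighborhood disjoint from $\overline{S}$, contradicting the defining property of the support.

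For the harder inclusion $\overline{S} \subseteq \supp(X)$, since $\supp(X)$ is closed it suffices to show $S \subseteq \supp(X)$. Fix $f = \sum_{n=1}^{\infty} f_n \in S$ and an arbitrary open neighborhood $U$ of $f$. By the form of the metric $d$, there exist a compact set $K \subset G$ and $\delta > 0$ such that $\{g : \sup_{s \in K} |g(s) - f(s)| < \delta\} \subseteq U$. Decompose $X = T_N + R_N$ with $T_N = \sum_{n=1}^{N} X_n$ and $R_N = \sum_{n > N} X_n$. By convergence of $\sum f_n$ to $f$ and almost-sure convergence of $\sum X_n$ (which forces $R_N \to 0$ in probability on compacta), choose $N$ so large that $\sup_{s \in K}|\sum_{n \le N} f_n(s) - f(s)| < \delta/3$ and $\mathbb{P}(\sup_{s \in K} |R_N(s)| < \delta/3) > 0$. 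The standard fact that the support of a finite sum of independent Polish-space-valued random elements contains the sum of any elements of the individual supports gives $\sum_{n \le N} f_n \in \supp(T_N)$, so $\mathbb{P}(\sup_{s \in K} |T_N(s) - \sum_{n \le N} f_n(s)| < \delta/3) > 0$. Finally, using the independence of $T_N$ and $R_N$ together with the triangle inequality,
\[
\mathbb{P}(X \in U) \;\geq\; \mathbb{P}\!\left(\sup_{s \in K}\left|T_N - \sum_{n \le N} f_n\right| < \tfrac{\delta}{3}\right) \cdot \mathbb{P}\!\left(\sup_{s \in K} |R_N| < \tfrac{\delta}{3}\right) \;>\; 0,
\]
which shows $f \in \supp(X)$.

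The main technical step is the lower bound $\mathbb{P}(\sup_{s \in K} |R_N| < \delta/3) > 0$ for sufficiently large $N$, which is not quite immediate from almost-sure convergence alone; however, the Itô--Nisio theorem (or, more elementarily, the Lévy inequality for sums of independent symmetric random variables, adapted to the Fréchet-space setting of $\mathcal{H}(G)$) guarantees that the tails $R_N$ converge to $0$ in probability in $\mathcal{H}(G)$, so the required probability is in fact close to $1$ for large $N$. Once this is available the rest is bookkeeping with the metric $d$.
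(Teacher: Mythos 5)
Your argument is correct, and it is worth noting at the outset that the paper does not actually prove this lemma: it simply cites Laurin\v{c}ikas \cite{La}, Theorem~1.7.10. What you have written is therefore a self-contained substitute for that reference rather than an alternative to an argument in the text, and it follows the standard route. Both inclusions are handled properly: the easy one via the fact that almost surely $X_n\in\supp(X_n)$ for every $n$ simultaneously and the series converges (here it is cleaner to conclude directly that $\mathbb{P}(X\in\overline{S})=1$, since $\overline{S}$ is closed and hence Borel, rather than asserting $\mathbb{P}(X\in S)=1$ for the possibly non-Borel set $S$); the hard one via the decomposition $X=T_N+R_N$, the inclusion $\supp(T_N)\supseteq\supp(X_1)+\dots+\supp(X_N)$ for finitely many independent summands (which follows from continuity of addition and independence, exactly as in your displayed product bound), the independence of $T_N$ and $R_N$, and the fact that sets of the form $\{g:\sup_{s\in K}|g(s)-f(s)|<\delta\}$ form a neighborhood basis for the metric $d$.

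One small correction: the tail bound $\mathbb{P}(\sup_{s\in K}|R_N(s)|<\delta/3)>0$ \emph{is} immediate from the hypothesis and needs neither the It\^o--Nisio theorem nor a L\'evy-type inequality. Almost sure convergence of $\sum_n X_n$ in $\mathcal{H}(G)$ means that the partial sums converge to $X$ uniformly on compacta almost surely, so $\sup_{s\in K}|R_N(s)|=\sup_{s\in K}|X(s)-T_N(s)|\to 0$ almost surely, hence in probability, and the probability in question in fact tends to $1$ as $N\to\infty$. (It\^o--Nisio would be relevant only if one started from convergence in probability or in distribution and wanted to upgrade it; here almost sure convergence of the tails is already assumed.) With that simplification the proof is complete and correct.
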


\begin{proof}
See \cite[Theorem~1.7.10]{La}.
\end{proof}

\begin{proof}[Proof of Proposition~\ref{proposition2}]
We recall that we define 
\[
g_p(s, \omega(p)) = \sum_{k=1}^{\infty} \frac{b(p^k) \omega(p)^k}{(\log{p^k})^{m} p^{ks}}
\]
in Proposition~\ref{density}.
The probability measure $\mathcal{Q}$ is the distribution of $\tilde{H}_m(s, \omega)$ by definition of $\mathcal{Q}$. 
Hence, by Lemma~\ref{lemma prob3}, we have 
\begin{align*}
\supp(\mathcal{Q}) &= \supp(\tilde{H}_m(s, \omega)) = \supp \left(\sum_p g_p(s, \omega(p))\right) \\
&= \overline{\left\{\sum_p x_p : \text{$x_p \in \supp \left(g_p(s, \omega(p))\right)$ $\sum_p x_p$ is convergence in $\mathcal{H}(\mathcal{R})$.} \right\}}.
\end{align*}
Here, we will show that
\[
\supp \left(g_p(s, \omega(p))\right) \supset \left\{g_p(s, z) : z \in \gamma \right\} =:U_p.
\]

Let $z \in \gamma$ and $\varepsilon > 0$ be arbitrary.
Then, we have 

\begin{align*}
&\m \left\{ d \left(g_p(s, \omega(p)), g_p(s, z)\right) < \varepsilon \right\} \\
&= \frac{1}{2\pi} \int_{0}^{1} I_{\left\{d \left(g_p(s, e^{2\pi ix}), g_p(s, z )\right) 
< \varepsilon \right\}}\, dx, 
\end{align*}
where $I_A$ is the indicator function of the set $A$. 
Now, we have
\begin{align*}
d \left(g_p(s, e^{2\pi ix}), g_p(s, z)\right)
&\le \sum_{j=1}^{\infty}\frac{1}{2^j} \sup_{s \in K_j} \left| \sum_{k=1}^{\infty}\frac{b(p^k)e^{2\pi ikx}}{p^{ks}(\log{p^k})^m} - \sum_{k=1}^{\infty} \frac{b(p^k)z^k}{p^{ks}(\log{p^k})^m}\right| \\
&\ll \sum_{j=1}^{\infty}\frac{1}{2^j} \sum_{k=1}^{\infty} \frac{|e^{2\pi ikx} - z^k|}{p^{k(\sigma_1 - \theta)} (k\log{p})^m}\\
&\le |e^{2\pi ix} - z| \sum_{k=1}^{\infty} \frac{k}{p^{k(\sigma_1 - \theta)} (k\log{p})^m}.
\end{align*}
By $\sigma_1 >\theta$, there is a constant $M >0$ such that  
\[
d \left(g_p(s, e^{2\pi ikx}), g_p(s, z)\right) < M|e^{2\pi ix} - z|.
\]
Therefore, we have 
\begin{align*}
&\m \left\{ d \left(g_p(s, \omega(p)),g_p(s, z)\right) < \varepsilon \right\} \\
&\ge \frac{1}{2\pi} \int_{0}^{1} I_{\left\{ M |e^{2\pi ix} - z|< \varepsilon \right\}}\, dx >0.
\end{align*}
Then we obtain $\supp\left(g_p(s, \omega(p))\right) \supset U_p$. 
By the definition of $\mathcal{Q}$ and Proposition~\ref{density}, we conclude 
\begin{align*}
\mathcal{H}(\mathcal{R}) \supset \supp(\mathcal{Q}) \supset \overline{\left\{\sum_p x_p :\text{ $x_p \in U_p$, $\sum_p x_p$ is convergence in $\mathcal{H}(\mathcal{R})$}. \right\}}
= \mathcal{H}(\mathcal{R}).
\end{align*}

\end{proof}

\section{PROOF OF  THE MAIN THEOREM}
\begin{proof}[Proof of Theorem~\ref{main theorem}]
Let $m$ be a nonnegative integer, $\mathcal{K}$ be a compact subset contained in $\mathcal{D}_\mathcal{L}$ with connected complement. 
Then, we take $\mathcal{R}$ with (\ref{definition of R}).  
Assume that $g(s)$ is a continuous function on $\mathcal{K}$ and holomorphic on the interior of $\mathcal{K}$. 
Fix $\varepsilon >0$.
By Mergelyan's theorem, there exists a polynomial $P(s)$ such that
\[
\sup_{s \in \mathcal{K}} |P(s) - g(s)| < \varepsilon.
\]
Here we define an open set of $\mathcal{H}(\mathcal{R})$ by $\Phi(P):= \left\{f(s) \in \mathcal{H}(\mathcal{R}) : \sup_{s \in \mathcal{K}} |f(s) - P(s)| < \varepsilon \right\}$. 
Applying Portmanteau theorem (See \cite[Theorem~13.16]{Kl}), Proposition~\ref{proposition1}, 
and Proposition~\ref{proposition2}, 
we have  
\begin{align*}
&\liminf_{T \to \infty} \frac{1}{T} \meas \left\{\tau \in [T, 2T] :  \sup_{s \in \mathcal{K}} |\tilde{H}_{m}(s +i\tau) - P(s)| < \varepsilon \right\} \\
&\ge \liminf_{T \to \infty} \frac{1}{\meas(\mathcal{I}_\mathcal{K}(T))} \meas \left\{\tau \in \mathcal{I}_\mathcal{K}(T) : \sup_{s \in \mathcal{K}} |\tilde{H}_{m}(s + i\tau) -P(s)| < \varepsilon \right\} \\
&= \liminf_{T \to \infty} \mathcal{Q}_T(\Phi(P)) \ge \mathcal{Q}(\Phi(P)) >0.
\end{align*}
Now, the inequality
\[
\sup_{s \in \mathcal{K}} |\tilde{H}_{m}(s + i\tau) -g(s)| \le \sup_{s \in \mathcal{K}} |\tilde{H}_{m}(s + i\tau) -P(s)| + \sup_{s \in \mathcal{K}} |P(s) - g(s)|
\]
holds. 
Thus, we obtain the conclusion.
\end{proof}

\begin{acknowledgment}

The author would like to thank deeply Professor Kohji Matsumoto and Dr. Kenta Endo for their helpful comments and many valuable advice.

\end{acknowledgment}

\begin{flushleft}
{\footnotesize
{\sc
Graduate School of Mathematics, Nagoya University, Chikusa-ku, Nagoya 464-8602, Japan.
}\\
{\it E-mail address}: {\tt m21029d@math.nagoya-u.ac.jp}
}
\end{flushleft}


\begin{thebibliography}{99}

\bibitem{Ba} B. Bagchi, \emph{Statistical behaviour and universality properties of the Riemann zeta-function and other allied Dirichlet series}, Thesis, Calcutta, Indian Statistical Institute, 1981.

\bibitem{Bi} P. Billingsley, \emph{Convergence of Probability Measures}, 2nd ed, Wiley, NewYork,1999.

\bibitem{Bo} H. Bohr, Zur Theorie der Riemann'schen Zetafunktion im kritischen Streifen, Acta Math. \textbf{40} (1915),  67–100.

\bibitem{BC} H. Bohr and R. Courant, Neue Anwendungen der Theorie der Diophantischen Approximationen auf die Riemannschen Zetafunktion, J. Reine Angew. Math. \textbf{144} (1914), 249–274. 

\bibitem{Co} J. B. Conway, \emph{Functions of One Complex Variabe I}, 2nd ed, Springer, 1978.

\bibitem{En} K. Endo, Universality theorem for the iterated integrals of the logarithm of the Riemann zeta-function, Lith. Math. J. \textbf{62} (2022), 315-332. 

\bibitem{EI} K. Endo and S. Inoue, On the value distribution of iterated integrals of the logarithm of the Riemann zeta-function I: Denseness, Forum Math. \textbf{33} (1), 167–176, 2021. 

\bibitem{GS} A. Granville and K. Soundararajan, Extreme values of $\zeta(1 + it)$, The Riemann Zeta Function an Related Themes: Papers in Honour of Professor K. Ramachandra, 65--80, Ramanujan Math. Soc. Lect. Notes Ser. \textbf{2}, Ramanujan Math. Soc., Mysore, 2006.

\bibitem{In} S. Inoue, On the logarithm of the Riemann zeta-function and its iterated integrals, preprint, arXiv:1909.03643(2019).

\bibitem{KP} J. Kaczorowski and A. Perelli, On the prime number theorem for the Selberg class. Arch. Math, \textbf{80}. 3 (2003), 255-263. 

\bibitem{Kl} A. Klenke. \emph{Probability Theory: A Comprehensive Course}, 3rd ed, Springer Science \& Business Media, 2020.

\bibitem{Ko} E. Kowalski, \emph{An Introduction to Probabilistic Number Therory}, Cambridge University Press, 2021. 

\bibitem{La} A. Laurin\v{c}ikas, \emph{Limit Theorems for the Riemann Zeta-function}, Kluwer, 1996.

\bibitem{LM} A. Laurin\v{c}ikas and K. Matsumoto, The universality of zeta-functions attached to certain cusp forms, Acta Arith., \textbf{98} (2001), 345-359.

\bibitem{Le} Y. Lee, The universality theorem for Hecke \textit{L}-functions, Math. Z. \textbf{271} (2012), 893-909.

\bibitem{Mi} H. Mishou, The universality theorem for Hecke \textit{L}-functions. Acta Arith. \textbf{110} (1), 45–71 (2003)

\bibitem{MV} H. L. Montgomery and R. C. Vaughan, Hilbert's inequality, J. London Math. Soc. (2) \textbf{8} (1974), 73–82. 

\bibitem{NS} H. Nagoshi and J. Steuding, Universality for \textit{L}-functions in the Selberg class, Lith. Math. J. \textbf{50} (2010), 293–311.

\bibitem{PS} \L. Pańkowski and J. Steuding, Extreme values of \textit{L}-functions from the Selberg class, Int. J. Number Theory \textbf{9} (5) (2013), 1113–1124. 

\bibitem{Se} A. Selberg, Old and new conjectures and results about a class of dirichlet series, Proceedings  of the  Amalfi Conference  on  Analytic  Number  Theory, pp. 367–385,  Univ. Salerno, 1992. 

\bibitem{St} J. Steuding, \emph{Value-distribution of \textit{L}-functions}, Lecture Notes in Math. 1877, Springer, 2007.

\bibitem{Ti} E. C. Titchmarsh, \emph{The Theory of the Riemann Zeta-Function}, Second Edition, Edited and
with a preface by D. R. Heath–Brown, The Clarendon Press, Oxford University Press, New
York, 1986.

\bibitem{Vo} S. M. Voronin, Theorem on the universality of the Riemann zeta-function, Izv. Akad. Nauk SSSR Ser. Mat. \textbf{39} (1975) 475–486 (in Russian); Math. USSR Izv. \textbf{9} (1975), 443–453.
\end{thebibliography}
\end{document}